\documentclass[11pt,a4paper, final]{article}
\usepackage{template}
\usepackage{calc}
\usepackage{pgfplots}
\usepackage{upgreek}

\usepackage[normalem]{ulem}

\DeclareMathOperator\Circ{Circ}
\DeclareMathOperator\E{E}
\let\P\relax
\DeclareMathOperator\P{P}

\newcommand{\Z}{\mathbb{Z}}
\newcommand{\R}{\mathbb{R}}
\newcommand{\N}{\mathbb{N}}

\usetikzlibrary{patterns}

\begin{document}

\title{No infinite clusters at criticality for two-dimensional dependent percolation under abstract hypotheses}

\date{\today}
\author{
  Rangel Baldasso
  \thanks{Email: \ \texttt{rangel@puc-rio.br}; \ Department of Mathematics, PUC-Rio, Rua Marqu\^es de S\~ao Vicente 225, G\'avea, 22451-900 Rio de Janeiro, RJ - Brazil.}
}

\maketitle

\begin{abstract}
We establish a general criterion ensuring the absence of infinite connected components at criticality for a broad class of two-dimensional dependent percolation models. Our assumptions include monotonicity, stationarity and ergodicity, positive association, a quantitative decoupling condition, continuity in the percolation parameter, and trivial behavior at the extremes. As application, we show how our criterion recovers known results for classical dependent percolation models, such as Gaussian level-set percolation and Boolean percolation.

\medskip

  \noindent
  \emph{Keywords and phrases.} Percolation, critical phenomena, phase transition.

  \noindent
  MSC 2010: 82B43, 60K35, 82B27.
\end{abstract}

\section{Introduction}

The study of critical percolation in two dimensions has a long history, beginning with Harris' 1960 work~\cite{harris1960}, proving that $p_c \geq \frac{1}{2}$ for Bernoulli bond percolation on the square lattice. Later on, Kesten~\cite{kesten1980} identified the critical threshold $p_{c} = \frac{1}{2}$ via delicate analysis of crossing probabilities.

The Russo--Seymour--Welsh (RSW) theory, developed independently by Russo~\cite{russo} and by Seymour and Welsh~\cite{seymour_welsh} in the late 70s, provides the fundamental estimates on crossing probabilities that underlie much of modern two-dimensional percolation. At first, these estimates yielded scale-invariant bounds at criticality and are by now a key tool in the analysis of planar models. Together with the FKG inequality, they form the basis of many proofs concerning the geometry of critical clusters.

The absence of an infinite cluster at criticality extends far beyond independent bond percolation on the square lattice. Several analogous results have been obtained through variants of the RSW theory and planar duality (for instance, the Boolean percolation~\cite{mr, att, att2}, Voronoi percolation~\cite{br}, confetti percolation~\cite{muller}). Modern treatments of these facts emphasize the use of finite-size criteria and crossing estimates as the essential ingredients, but remain model-dependent. In this note, we deduce results of this kind relying solely on a short list of general hypotheses that can be used to recover results for critical percolation on a large class of dependent models.

Consider a two-dimensional percolation model on $\Z^{2}$ indexed by a parameter $\lambda \in [0,1]$ and denote by $\P_{\lambda}$ the distribution of the process at $\lambda$.

We collect a list of assumptions on the model. Our first assumption is that
\begin{equation}\label{h1}
    \tag{H1}
    \text{the model is monotone with respect to $\lambda$}.
\end{equation}
This means that, if $A \subset \{0,1\}^{\Z^{2}}$ is an increasing event, then $\lambda \mapsto \P_{\lambda}(A)$ is a non-decreasing function.

The second assumption regards invariance properties with respect to symmetries of $\Z^{2}$:
\begin{equation}\label{h2}
    \tag{H2}
    \begin{array}{c}
    \text{$\P_\lambda$ is invariant under symmetries of $\Z^{2}$} \\
    \text{and ergodic with respect to shifts.}
    \end{array}
\end{equation}

The following two assumptions control the correlations of the percolation model. The first assumption states that
\begin{equation}\label{h3}
    \tag{H3}
    \text{$\P_{\lambda}$ satisfies the FKG inequality.}
\end{equation}
The second assumption quantifies the decay rate of correlation. There exists a non-increasing sequence $r_{n} \geq 0$ such that, for every two events $A$ and $B$ with supports in $[-3n,3n]^{2}$ and $\Z^{2} \setminus [-4n,4n]^{2}$, respectively, it holds that, for every $\lambda$,
\begin{equation}\label{eq:covariance}
    \Cov_{\lambda} \big( \textbf{1}_{A}, \textbf{1}_{B} \big) \leq r_{n},
\end{equation}
with
\begin{equation}\label{h4}
    \tag{H4}
    \sum_{n=0}^{\infty} \frac{r_{n}}{n} < \infty.
\end{equation}

Hypothesis~\eqref{h4} is a quantitative decoupling assumption that controls the dependence between events supported on distant sets. It replaces independence in the classical Bernoulli setting and is the main hypothesis distinguishing our framework from previous ones.

We now state two final hypotheses concerning the behavior of the probabilities of local events with respect to the parameter $\lambda$. The first one states that, for every finitely-supported event $A$,
\begin{equation}\label{h5}
    \tag{H5}
    \lambda \mapsto \P_{\lambda}(A) \text{ is continuous}.
\end{equation}

The second hypothesis requires that, for every finitely-supported non-trivial increasing event $A$,
\begin{equation}\label{h6}
    \tag{H6}
    \lim_{\lambda \to 0} \P_{\lambda}(A) = 0 \qquad \text{and} \qquad \lim_{\lambda \to 1} \P_{\lambda}(A) = 1.
\end{equation}

\begin{remark}
    Hypothesis~\eqref{h6} requires that the model behaves trivially for extreme values of the parameter, ensuring that the phase transition occurs at a finite parameter (see Theorem~\ref{t:phase_transition} below). I fact, under~\eqref{h2},~\eqref{h6} can be replaced by
\begin{equation}
    \lim_{\lambda \to 0} \P_{\lambda}( \omega_e =1 ) = 0 \qquad \text{and} \qquad \lim_{\lambda \to 1} \P_{\lambda}( \omega_e =1 ) = 1,
\end{equation}
where $\omega_{e}$ denotes the percolation configuration at a given edge $e$.
\end{remark}

\begin{remark}
    Hypotheses~\eqref{h3} and~\eqref{h4} are most restrictive ones, and genuinely of probabilistic nature. On the other hand,~\eqref{h1},~\eqref{h2},~\eqref{h5}, and~\eqref{h6} are regularity properties satisfied by many natural one-parameter percolation models.
\end{remark}

Define the percolation function
\begin{equation}
    \theta(\lambda) = \P_{\lambda} \big( 0 \leftrightarrow \infty \big).
\end{equation}
Under~\eqref{h1}, the function $\theta$ is non-decreasing and one can define the critical percolation threshold
\begin{equation}
    \lambda_{c} = \inf \{ \lambda : \theta(\lambda) >0\}.
\end{equation}

Notice that $\lambda_{c}$ does not need to be well defined in general nor it needs to be non-trivial. Our first result regards the existence of percolation at the critical value $\lambda_{c}$ when both conditions are satisfied.
\begin{theorem}\label{t:no_percolation}
    Given a percolation model satisfying~\eqref{h1},~\eqref{h2},~\eqref{h3},~\eqref{h4},~\eqref{h5}, and such that $\lambda_{c} \in (0,1)$, we have
    \begin{equation}
        \theta(\lambda_{c}) = \P_{\lambda_{c}} \big( 0 \leftrightarrow \infty \big) = 0.
    \end{equation}
\end{theorem}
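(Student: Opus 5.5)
We argue by contradiction: assume $\theta(\lambda_c)>0$ and derive percolation strictly below $\lambda_c$ using a finite-size criterion. The criterion is the natural one in this setting. Write $C_n$ for the event that the annulus $A_n=[-4n,4n]^2\setminus[-3n,3n]^2$ has an open circuit surrounding $[-3n,3n]^2$, together with an open crossing from that circuit to $\partial[-16n,16n]^2$ (or some fixed variant). We aim to show that if $\P_\lambda(C_n)$ exceeds a threshold $1-\varepsilon_0$ for some large $n$ (with $\varepsilon_0$ depending only on the sequence $r_n$), then $\theta(\lambda)>0$. Granting this, the argument closes quickly. Under $\theta(\lambda_c)>0$, ergodicity (H2) gives an almost surely unique infinite cluster, by Burton--Keane, since FKG and ergodicity on $\Z^2$ give the standard argument. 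The square-root trick with FKG (H3) and the symmetries in (H2) then show that the probability of an open circuit in $A_n$, and of its connection far away, tends to $1$ as $n\to\infty$. Choose $n$ with $\P_{\lambda_c}(C_n)>1-\varepsilon_0/2$. Since $C_n$ is finitely supported, continuity (H5) gives some $\lambda<\lambda_c$ with $\P_\lambda(C_n)>1-\varepsilon_0$. The criterion then yields $\theta(\lambda)>0$, contradicting the definition of $\lambda_c$. The assumption $\lambda_c\in(0,1)$ is used only to guarantee that such $\lambda$ with $\lambda<\lambda_c$ exist in the parameter range.

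The core step is the finite-size criterion, done by renormalization along scales $n_{k+1}=8n_k$ (or a growing ratio). Let $u_k=1-\P_\lambda(\text{circuit in }A_{n_k}\text{ plus crossings at scale }n_k)$. The event that $\{0\leftrightarrow\partial B_{n_{k+1}}\}$-type crossings fail at scale $n_{k+1}$ is contained in the intersection of failures of two (or a bounded number of) well-separated translated copies at scale $n_k$. This holds because at scale $n_{k+1}$ a crossing of a long rectangle can be built from circuits and crossings of smaller boxes placed along it, so its failure forces two disjoint subfailures. Using (H4)'s covariance bound on events supported in $[-3n,3n]^2$ versus outside $[-4n,4n]^2$ (after translation; failures are decreasing events, and the bound applies to them since covariance is symmetric under complements), we get the recursion
\begin{equation*}
u_{k+1}\le C u_k^2 + C r_{n_k}.
\end{equation*}
Iterating, if $u_0$ is small and $\sum_k r_{n_k}$ is small, then $u_k\le 2C r_{n_{k-1}}+ (\text{doubly exponential})$, and in particular $\sum_k u_k<\infty$. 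With geometric scales, $\sum_k r_{8^k n_0}$ is comparable to $\sum_m r_m/m$ by monotonicity of $r_n$, which is exactly where (H4) enters. Then Borel--Cantelli plus FKG (H3) glue the circuits at all scales $k\ge k_0$. Each is connected to the next via the crossings in $C_{n_k}$, and FKG lower bounds the probability that the origin connects to the first circuit. This gives $\theta(\lambda)>0$.

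The main obstacle is the geometry of the recursion: showing that failure at scale $8n$ implies failure in two translates whose supports fit the $[-3n,3n]^2$ versus $\Z^2\setminus[-4n,4n]^2$ form of the covariance bound. The bound only controls pairs of events in that specific nested configuration, so I would choose translates centered at distance at least $7n$ apart and apply (H4) around one of the centers. I would also have to handle that the bound controls covariance for only two events; hence the recursion should involve exactly two disjoint failures, which is why each scale needs rectangle crossings combined via RSW-free gluing of circuits by FKG. A secondary concern is making the threshold $\varepsilon_0$ and the starting scale uniform, which requires that $n_0$ be large enough that $\sum_{k}r_{n_k}$ is small. This is possible because the tail of a convergent series vanishes, so $n$ in the first paragraph is taken large accordingly.
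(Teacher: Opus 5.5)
Your overall architecture matches the paper's proof of Theorem~\ref{t:no_percolation}. You argue by contradiction, prove a finite-size criterion by a two-failure renormalization $u_{k+1}\le Cu_k^2+Cr_{n_k}$ summed via Cauchy condensation and Borel--Cantelli (this is Proposition~\ref{prop:fsc}), and move the criterion below $\lambda_c$ using \eqref{h5}.

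\textbf{The gap.} It lies in the other direction: your claim that $\theta(\lambda_c)>0$ forces $\P_{\lambda_c}(C_n)\to1$. In particular you need an open circuit in the fixed-ratio annulus $[-4n,4n]^2\setminus[-3n,3n]^2$ with probability tending to one. Ergodicity, the square-root trick, FKG and the symmetries by themselves only produce \emph{arm} events: the infinite cluster meets a box and leaves a larger box through each face with high probability. None of them confines an open circuit to an annulus of bounded aspect ratio. That is exactly RSW-type information, and it is what the paper calls the heart of the proof (Proposition~\ref{prop:crossing_to_one}). Enlarging the outer radius to make circuits easier to find does not close the argument as written either. The number of translates in your recursion, hence $C$ and $\varepsilon_0$, would then depend on that ratio. (Also, Burton--Keane requires finite energy, which is not assumed; uniqueness is not needed anyway.)

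\textbf{The missing step, as the paper does it.}
\begin{itemize}
\item FKG and rotation invariance give $\P_\lambda(\mathcal{C}(2k+1,2k+1))\ge\theta(\lambda)^2/16$ for all $k$.
\item The K\"ohler-Schindler--Tassion theorem (Theorem~\ref{lemma:rsw}), which needs only \eqref{h2} and \eqref{h3}, upgrades this to $c=\inf_n\P_\lambda(\mathcal{C}(3n,n))>0$. Hence $\P_\lambda(\Circ(k,3k))\ge c^4$ at every scale.
\item Decoupling these over a number of order $\log(n/m)$ separated scales with \eqref{eq:covariance} bounds $\P_\lambda(\Circ(m,n)^c)$ by a constant times $(n/m)^{-\alpha}+r_m\log(n/m)$. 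So $\Circ(\lfloor\sqrt n\rfloor,n)$ holds with high probability, since $r_n\log n\to0$ under \eqref{h4}.
\item Only then do arm arguments finish the job. The infinite cluster meets $[-\lfloor\sqrt n\rfloor,\lfloor\sqrt n\rfloor]^2$ and leaves $[-n,n]^2$ through both the left and the right face with high probability, and the circuit joins the two. So square crossings have probability tending to one.
\item One more application of RSW ($\psi(1)=1$) turns near-one square crossings into near-one $3n\times n$ crossings, which is the input of Proposition~\ref{prop:fsc}. Gluing boundedly many of them would also give your circuit in $A_n$.
\end{itemize}
So your scheme can be completed, but only by inserting this RSW-plus-decorrelation step, which you explicitly tried to avoid.

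\textbf{A smaller geometric point.} With scale ratio $8$, an arm to $\partial[-16n,16n]^2$ does not reach the next circuit, which lies in $[-32n,32n]^2\setminus[-24n,24n]^2$. The arm length or the scale ratio needs adjusting so that consecutive circuits are actually linked.
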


\begin{remark}\label{rmk:t1}
We list a few observations on the result above.
\begin{enumerate}
    \item Although the theorem is stated for parameters $\lambda \in [0,1]$, one can also consider different parametrizations, such as $\lambda >0$ or $\lambda \in \R$.
    \item The main difficulty in applying the theorem is verifying the quantitative decoupling hypothesis~\eqref{h4}; the remaining assumptions are standard for most positively associated percolation models (see Section~\ref{sec:examples}).
    \item The most restrictive hypothesis in the above is the correlation decay in~\eqref{h4}, as it is required to be uniform over the parameter $\lambda$. This can be weakened by asking it to be uniform over compact intervals in $(0,1)$.
    \item The same proof still holds for site percolation and continuous percolation models in $\R^{2}$.
\end{enumerate}
\end{remark}

The next theorem states that~\eqref{h6} implies that the percolation phase transition is indeed non-trivial.
\begin{theorem}\label{t:phase_transition}
    Assume the percolation model satisfies~\eqref{h1},~\eqref{h2},~\eqref{h3},~\eqref{h4}, and~\eqref{h6}. Then the phase transition is non-trivial, that is, $\lambda_{c} \in (0,1)$.
\end{theorem}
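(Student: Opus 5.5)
Both bounds $\lambda_c>0$ and $\lambda_c<1$ will come from a finite-size criterion: a renormalization built on the decoupling \eqref{h4}, which is then triggered by \eqref{h6} at extreme $\lambda$. By symmetry \eqref{h2} and planar duality, the two bounds are essentially the same argument applied to the configuration and to its dual. We use the dual model obtained by the usual rotation on the dual lattice, setting a dual edge open iff the primal edge it crosses is closed. The dual model is again monotone (in $1-\lambda$), invariant, FKG and decoupled with the same $r_n$, since \eqref{eq:covariance} holds for all events, not only increasing ones.

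\emph{Step 1 (finite-size criterion).} Let $p_n(\lambda)$ be the probability that the annulus $[-3n,3n]^2\setminus[-n,n]^2$ contains an open path from $[-n,n]^2$ to $\partial[-3n,3n]^2$; this event is supported in $[-3n,3n]^2$. Fix scales $n_{k+1}= 10\,n_k$. At least two well-separated translates of a scale-$n_k$ crossing event are needed for a crossing at scale $n_{k+1}$. The positions of these translates can be chosen among a bounded number $C$ of boxes, using only translations and rotations from \eqref{h2}. This gives
\[
p_{n_{k+1}} \le C\,\big(p_{n_k}^2 + r_{n_k}\big),
\]
where \eqref{eq:covariance} is applied to the two separated events after rescaling the boxes by constants; this uses that $r_n$ is non-increasing. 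Set $u_k = C p_{n_k}$. Then $u_{k+1}\le u_k^2 + C^2 r_{n_k}$. Since $r$ is non-increasing, $\sum_k r_{10^k}\lesssim \sum_n r_n/n<\infty$ by \eqref{h4}; in particular $r_{n_k}\to0$. A short induction then shows that if $u_{k_0}$ is small enough and the tail $\sum_{k\ge k_0} r_{n_k}$ is small enough, $u_k$ stays below a small constant and in fact tends to $0$. This is the main obstacle: getting a recursion whose error term is summable along geometric scales exactly under \eqref{h4}. I would first try the crude union-bound recursion above, and check that $r_{n}$ rather than, say, $n^2 r_n$ appears. If a polynomial factor slips in, I would refine by using annulus crossings, which need only two events, rather than point-to-point connections.

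\emph{Step 2 (no percolation for small $\lambda$).} Fix $k_0$ so large that $C^2\sum_{k\ge k_0} r_{n_k}$ is tiny; this choice is uniform in $\lambda$ because $r_n$ does not depend on $\lambda$. The event in $p_{n_{k_0}}$ is finitely supported, nontrivial and increasing. By \eqref{h6}, $p_{n_{k_0}}(\lambda)\to0$ as $\lambda\to0$, so for small $\lambda>0$ Step 1 gives $p_{n_k}(\lambda)\to0$. Since $\theta(\lambda)\le p_{n_k}(\lambda)$ for every $k$, we get $\theta(\lambda)=0$, hence $\lambda_c>0$.

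\emph{Step 3 (percolation for $\lambda$ near $1$).} Apply Step 1 to the dual model. The relevant dual crossing event is decreasing in the primal configuration, so \eqref{h6} gives that its probability tends to $0$ as $\lambda\to1$. Therefore, for $\lambda$ close to $1$, the probability of a closed dual circuit separating $[-n_k,n_k]^2$ from $\infty$ in the annulus at scale $n_j$ is at most a summable quantity in $j$. By Borel--Cantelli, along the annuli at scales $n_j$ with $j\ge k$ only finitely many contain closed dual circuits, with probability close to $1$ when $k$ is large. Primal open circuits in the annuli at scales $n_j$ then exist at all large scales, by the analogous argument applied to the primal box-crossing events, whose probabilities tend to $1$ via \eqref{h6}. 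These circuits are glued into an infinite path by FKG \eqref{h3}. Thus, with positive probability, $[-n_k,n_k]^2$ is connected to infinity. Ergodicity \eqref{h2} then gives an infinite cluster almost surely. Finally, FKG together with finite-energy-free local modification gives $\theta(\lambda)>0$: we combine with the increasing event that the box $[-n_k,n_k]^2$ is fully open, which has positive probability by \eqref{h6}. Hence $\lambda_c<1$.
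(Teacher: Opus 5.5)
Your Steps 1--2 are correct, and for $\lambda_c>0$ they take a genuinely different and more economical route than the paper. The paper first shows that the dual configuration percolates for small $\lambda$ (Proposition~\ref{prop:fsc} applied to the dual). It then invokes Lemma~\ref{lemma:circ}, which rests on the K\"ohler-Schindler--Tassion RSW theorem and Lemma~\ref{lemma:lower_bound_box_crossing}, to surround the origin by dual circuits. Your one-arm recursion $p_{n_{k+1}}\le C(p_{n_k}^2+r_{n_k})$ does hold as set up. Take crossing points at $\ell^\infty$-norm about $12n_k$ and $25n_k$: the two translates are more than $7n_k$ apart and there are $O(1)$ choices, so no polynomial factor appears. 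Together with $\theta(\lambda)\le p_{n_k}(\lambda)$, this kills percolation directly from translation invariance, \eqref{h4} and \eqref{h6}, with no RSW, FKG or planarity.

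The gap is in Step 3; for $\lambda_c<1$ the paper simply applies \eqref{h6} to $\mathcal{C}(3n,n)$ and invokes Proposition~\ref{prop:fsc}. Your annuli $[-3n_j,3n_j]^2\setminus[-n_j,n_j]^2$ with $n_{j+1}=10n_j$ are pairwise disjoint, separated by the regions $3n_j<\|x\|_\infty<10n_j$. So open circuits in them, even at every large scale, do not form an infinite path: you need open connections between consecutive circuits, and you never produce any. FKG cannot create such connections; it only lower-bounds the probability of an intersection of increasing events you have already exhibited. For the same reason, excluding blocking dual circuits ``in the annulus at scale $n_j$'' says nothing about a blocking dual circuit not contained in one of these annuli. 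Also, the obstruction to a primal circuit in an annulus is a dual-open \emph{crossing} of it, which is exactly what Step 1 controls, not a ``closed dual circuit''.

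The repair stays within your framework.
\begin{enumerate}
\item For $\lambda$ near $1$, Step 1 applied to the dual gives $\sum_j p^*_{n_j}(\lambda)<\infty$.
\item By Borel--Cantelli, almost surely for all large $j$ none of the $O(1)$ translates of the scale-$n_j$ dual annulus, centred at $z\in 2n_j\Z^2$ with $\|z\|_\infty\le 2n_{j+2}$, is crossed.
\item Take a dual-open circuit surrounding $[-n_J,n_J]^2$ whose maximal $\ell^\infty$-norm lies in $[n_{j+1},n_{j+2})$. It has diameter at least $10n_j$, so it crosses one of these translates.
\end{enumerate}
Hence almost surely $[-n_J,n_J]^2\leftrightarrow\infty$ for all large $J$.

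Your last step also has the quantifiers in the wrong order. \eqref{h6} makes ``$[-n_k,n_k]^2$ fully open'' likely only for $\lambda$ close to $1$ depending on $k$, while you choose $k$ after $\lambda$. The step is unnecessary anyway: a union bound and translation invariance give $\theta(\lambda)\ge \P_\lambda([-n_k,n_k]^2\leftrightarrow\infty)/(2n_k+1)^2>0$.
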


Combining the results above, we immediately obtain
\begin{theorem}\label{t:main}
    Assume the percolation model satisfies~\eqref{h1}--\eqref{h6}. Then the critical threshold $\lambda_{c}$ is non-trivial and $\theta(\lambda_{c}) = 0$.
\end{theorem}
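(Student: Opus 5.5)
Theorem~\ref{t:main} follows by combining the two preceding results, and the plan is to check that their hypotheses are met under~\eqref{h1}--\eqref{h6}. The first step is to apply Theorem~\ref{t:phase_transition}. It only uses~\eqref{h1},~\eqref{h2},~\eqref{h3},~\eqref{h4} and~\eqref{h6}, all of which are part of the standing assumptions, so it gives $\lambda_{c} \in (0,1)$. Since~\eqref{h1} makes $\theta$ non-decreasing, $\lambda_c$ is well defined as an infimum. The non-triviality just obtained guarantees that the set $\{\lambda : \theta(\lambda) > 0\}$ is non-empty, so the infimum is not taken over an empty set, and it also guarantees that this infimum is not an endpoint of $[0,1]$.

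The second step is to apply Theorem~\ref{t:no_percolation}. Its hypotheses are~\eqref{h1}--\eqref{h5} together with the requirement $\lambda_{c} \in (0,1)$. The first part is assumed and the second was just established, so it yields $\theta(\lambda_{c}) = \P_{\lambda_{c}}(0 \leftrightarrow \infty) = 0$.

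No step here poses a real obstacle, because all the substance lives in the two earlier theorems. The only point needing care is that the condition $\lambda_c\in(0,1)$, which is an explicit hypothesis in Theorem~\ref{t:no_percolation}, is exactly the conclusion of Theorem~\ref{t:phase_transition}. The two theorems therefore chain directly, with no further use of~\eqref{h6} beyond establishing non-triviality.
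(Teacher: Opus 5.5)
Your proposal is correct and matches the paper's argument: the paper obtains Theorem~\ref{t:main} by combining the two preceding results. The conclusion $\lambda_c \in (0,1)$ of Theorem~\ref{t:phase_transition}, which uses \eqref{h1}--\eqref{h4} and \eqref{h6}, is exactly the extra hypothesis that Theorem~\ref{t:no_percolation} needs alongside \eqref{h1}--\eqref{h5}, and that theorem then gives $\theta(\lambda_c)=0$.
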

\bigskip

\noindent\textbf{Proof sketches.}
The proof of the facts above are based on establishing finite-size criteria for percolation, in terms of probabilities of box-crossing events:
\begin{equation}\label{eq:crossing}
    \mathcal{C}(n,m) = \big\{\text{there exists an open left-right crossing of the box } [0,n] \times [0,m] \big\}.
\end{equation}

\begin{remark}
    In fact, in the proofs of Theorems~\ref{t:no_percolation} and~\ref{t:phase_transition}, we only use the information in hypotheses~\eqref{h5} and~\eqref{h6} for the crossing events above. In particular both results remain valid if one rephrases~\eqref{h5} and~\eqref{h6} by considering only crossing events.
\end{remark}

The key intermediate steps in the proof of Theorems~\ref{t:no_percolation} and~\ref{t:phase_transition} are the following two propositions.
\nc{c:fsc}
\begin{proposition}\label{prop:fsc}
Under~\eqref{h2},~\eqref{h3}, and~\eqref{h4}, there exists $\uc{c:fsc}>0$ such that, if
\begin{equation}\label{eq:fsc}
    \P_{\lambda} \big( \mathcal{C}(3n,n) \big) >1-\uc{c:fsc}, \text{ for some } n \in \N,
\end{equation}
then $\theta(\lambda) >0$.
\end{proposition}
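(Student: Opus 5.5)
The plan is the classical renormalization (bootstrap) argument for crossing probabilities, with independence replaced by the covariance bound \eqref{eq:covariance}. Set $p_n = \P_\lambda(\mathcal{C}(3n,n))$ and $n_k = 2^k n_0$, where $n_0$ is the scale in \eqref{eq:fsc}. By \eqref{h2} the crossing probabilities are the same for every translate and for the rotated (top-bottom) crossings of $[0,n]\times[0,3n]$. The first step is a recursive inequality relating $1-p_{2n}$ to $(1-p_n)^2$.

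\emph{Recursion.} A long crossing of $[0,6n]\times[0,2n]$ is built from shorter crossings of size $3n \times n$. Specifically, it occurs if there are horizontal crossings of the rectangles $[0,3n]\times[0,n]$, $[\frac{3n}{2},\frac{9n}{2}]\times[0,n]$, $[3n,6n]\times[0,n]$ together with vertical crossings of the $n\times 3n$ (or $n\times 2n$) squares where they overlap. Since each $n\times 2n$ vertical crossing is implied by a $n\times 3n$ one, a bounded number $K$ of events of probability $p_n$ suffice. To get a quadratic gain, use two disjoint horizontal strips, $[0,6n]\times[0,n]$ and $[0,6n]\times[n,2n]$, or better two copies far apart inside a larger box. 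The crossing of $[0,6n]\times[0,2n]$ fails only if both independent-ish attempts fail. Each attempt fails with probability at most $K(1-p_n)$.

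This requires placing the two attempts at distance comparable to $n$, which forces a change in the aspect ratios. Concretely, I would compare $\mathcal{C}(3n',n')$ at scale $n'=Ln$ (with $L$ a fixed constant, say $L=10$). Inside $[0,3n']\times[0,n']$ I put two horizontal "highways", each a chain of $O(L)$ crossings at scale $n$, separated by distance of order $n'$ so that one lies in a $[-3m,3m]^2$ box and the other in the complement of a $[-4m,4m]^2$ box with $m\asymp n'$ after translation. Then, by \eqref{eq:covariance} applied to the failure events $A,B$ (note that $\Cov(\mathbf 1_A,\mathbf 1_B)=\Cov(\mathbf 1_{A^c},\mathbf 1_{B^c})$), we obtain
\[
1-p_{Ln} \le \P(A^c)\P(B^c) + r_{cLn} \le \big(C_L(1-p_n)\big)^2 + r_{cLn}.
\]
The FKG inequality \eqref{h3} is used to lower bound each highway probability by a product of crossing probabilities, or simply a union bound gives $C_L(1-p_n)$.

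\emph{Iteration.} Let $u_k = C_L(1-p_{L^k n_0})$. The recursion reads $u_{k+1} \le u_k^2 + C_L r_{cL^{k+1}n_0}$. If $u_0 \le \varepsilon$ small and the error terms are summable and small, induction gives $u_k \le \varepsilon_k$ with $\sum_k u_k<\infty$. Summability of $\sum_k r_{L^k n_0}$ follows from \eqref{h4}: since $r$ is non-increasing, $\sum_k r_{L^k}\lesssim \sum_n r_n/n$. The tail beyond $n_0$ is small when $n_0$ is large; if $n_0$ is small, one first needs to pass to larger scales. This is handled by noting that the crossing estimate at scale $n_0$ propagates, but I expect the cleanest route is to choose $\uc{c:fsc}$ depending on the whole sequence $r$, e.g.\ requiring $u_{k}\le \delta$ inductively via $u_{k+1}\le \delta^2 + C_L r_{L^{k+1}n_0}\le\delta$. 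Making this uniform in $n_0$ is the main obstacle. I would try absorbing the error with $\delta$ chosen so that $\sup_m C_L r_m \le \delta/2$ fails only for small $m$, handled by trivially bounding the finitely many early scales via the quadratic term with a stronger initial smallness.

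\emph{Conclusion.} Once $\sum_k (1-p_{L^k n_0})<\infty$, consider crossings of alternately horizontal and vertical rectangles of sizes $3L^kn_0\times L^kn_0$ around the origin, arranged so that consecutive ones intersect. By Borel--Cantelli all but finitely many occur, yielding an infinite open path. Then, by ergodicity in \eqref{h2}, an infinite cluster exists a.s., so $\theta(\lambda)>0$ by stationarity.
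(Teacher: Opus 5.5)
Your scheme matches the paper's: two well-separated attempts at crossing the larger rectangle, decoupled by applying \eqref{eq:covariance} to the two failure events, and a quadratic recursion with an additive error $r$. The errors are then summed along a geometric sequence of scales via \eqref{h4} and condensation, and Borel--Cantelli finishes. You decouple two whole highways instead of the paper's $49$ pairs of single boxes. This forces a scale ratio $L>4$ instead of $3$, which is harmless. One slip: with $u_k=C_L(1-p_{L^kn_0})$ the recursion reads $u_{k+1}\le C_Lu_k^2+C_Lr_{m_k}$; normalize by $C_L^2$ to get the constant-free form.

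The genuine gap is the one you flag yourself: $\uc{c:fsc}$ must not depend on the scale $n$ at which \eqref{eq:fsc} holds, and your remedy ("via the quadratic term with a stronger initial smallness") does not work. At small scales the additive term $C_Lr_{m_k}$ is unaffected by how small $u_0$ is, and it need not be small, since \eqref{h4} says nothing about $r_m$ for small $m$. So no initial smallness lets the quadratic recursion keep $u_k\le\delta$ through the scales where $C_Lr_m>\delta$.

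The missing step is to drop the decoupling there altogether. Fix $N$, depending only on the sequence $(r_m)$, such that your induction runs from any scale $\ge N$. For $n<N$, a horizontal crossing of a $3N\times N$ box is implied by at most $K(N)$ hard-direction crossings of $3n\times n$ and $n\times 3n$ boxes. The union bound (or FKG) then gives $1-p_N\le K(N)(1-p_n)$. Choosing $\uc{c:fsc}$ small in terms of $\delta$, $C_L$ and $K(N)$ starts the induction at scale $N$ whatever $n$ is, uniformly in $\lambda$. This is exactly the last step of the paper's proof.

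A smaller point: in your conclusion, one rectangle per scale is not enough when $L>3$. The hard crossing of a rectangle whose short side is $L^{k+1}n_0$ can avoid the crossing of a $3L^kn_0\times L^kn_0$ rectangle. Use a bounded number of linking boxes per scale; the Borel--Cantelli sum stays finite.
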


While versions of Proposition~\eqref{prop:fsc} are widely available for a gamma of different models, the heart of the proof of Theorems~\ref{t:no_percolation} and~\ref{t:phase_transition} relies in its counterpart, stated below.
\begin{proposition}\label{prop:crossing_to_one}
    Assume~\eqref{h2},~\eqref{h3}, and~\eqref{h4}. If $\theta(\lambda)>0$, then
    \begin{equation}
        \lim_{n \to \infty} \P_{\lambda}\big( \mathcal{C}(2n+1,2n+1) \big) = 1.
    \end{equation}
\end{proposition}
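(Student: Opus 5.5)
We prove Proposition~\ref{prop:crossing_to_one} by combining uniqueness of the infinite cluster, the square-root trick, and the decoupling in~\eqref{h4}.

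\textbf{Step 1: the infinite cluster is a.s.\ unique.} Assume $\theta(\lambda)>0$. By ergodicity in~\eqref{h2}, an infinite cluster exists $\P_\lambda$-a.s. We first show it is a.s.\ unique. The Burton--Keane argument needs finite energy, which we do not have. In two dimensions we would instead use the Gandolfi--Keane--Russo argument, which only needs invariance, ergodicity and FKG, all provided by~\eqref{h2} and~\eqref{h3}. Its outline is as follows. If there were at least two infinite clusters, then with positive probability the four quarter-planes around a large box each contain an infinite open path reaching the box. FKG applied to these increasing events, together with ergodicity, then produces open circuits that glue the clusters together. This gives a contradiction.

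\textbf{Step 2: every side of $B_n=[-n,n]^2$ touches the infinite cluster.} Uniqueness gives $\P_\lambda(B_k \leftrightarrow \infty)\to1$ as $k\to\infty$. Fix $\varepsilon>0$ and choose $k$ with $\P_\lambda(B_k\leftrightarrow\infty)>1-\varepsilon$. For $n\gg k$, consider the four events
\[
A_i=\{\text{the } i\text{-th side of } \partial B_n \text{ is connected to } \infty \text{ in } \Z^2\setminus B_n\}, \qquad i=1,\dots,4.
\]
These are increasing events and, by the symmetries in~\eqref{h2}, they all have the same probability. The event that $B_n$ meets an infinite cluster outside itself is their union, and it has probability close to one. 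Since the $A_i$ are increasing, FKG (the square-root trick) gives
\[
\P_\lambda(A_i)\ \ge\ 1-\big(1-\P_\lambda(\textstyle\bigcup_i A_i)\big)^{1/4}\ \longrightarrow\ 1 .
\]

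\textbf{Step 3: from arms to a crossing.} We want to show that the left and right sides are connected inside $B_n$ with high probability. On $A_{\mathrm{left}}\cap A_{\mathrm{right}}$, uniqueness says the two arms belong to the same infinite cluster. They are therefore connected, but possibly only by a path that leaves $B_n$.

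To force the connection inside a bounded region, we work at two scales. Use $A_{\mathrm{left}}$ and $A_{\mathrm{right}}$ for a box of size $m\ll n$. Uniqueness gives, with high probability, a connection between the two arms inside $B_{N}$ for some finite $N=N(m,\varepsilon)$. Then rescale: an arm from the left side of a box reaching far away, together with the arm from the right side, connected in a bounded region, yields a left--right crossing of a rectangle of fixed aspect ratio with high probability.

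This step is where I would reproduce the standard argument that, in the supercritical regime with uniqueness and FKG, crossings of $B_n$ have probability tending to one. That argument goes through the absence of dual circuits. If a top--bottom dual crossing of $B_n$ existed with probability at least $\delta$ along a subsequence, FKG would glue dual crossings of shifted boxes into dual circuits around the origin at many independent-ish scales.

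\textbf{Step 4: decoupling to make the circuits occur.} To make these circuits occur with probability close to one, we need approximate independence across scales. This is exactly what~\eqref{h4} supplies. Take geometric scales $n_j$ with annuli $[-4n_j,4n_j]^2\setminus[-3n_j,3n_j]^2$. The covariance bound~\eqref{eq:covariance} makes the events in distinct annuli nearly independent. Summability of $\sum r_n/n$, which amounts to summing $r_{n_j}$ over dyadic scales, lets us multiply the failure probabilities with an error that can be made small. We conclude that with high probability there is a dual circuit around the origin at every scale. This contradicts $\theta(\lambda)>0$, hence $\P_\lambda(\mathcal C(2n+1,2n+1))\to1$.

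\textbf{Main obstacle.} I expect the hardest part to be Steps 3--4: turning a positive probability of dual crossings into dual circuits via RSW-type gluing using only FKG, and then applying decoupling over infinitely many scales. The first thing I would try is a Tassion-style RSW argument. It needs only symmetry and FKG, and it yields circuits in annuli from box-crossings of probability bounded below.
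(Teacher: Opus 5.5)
Once Steps~1--2 and the first half of Step~3 are removed, your proof is correct, but it takes a different route from the paper. The argument that actually closes it is in Steps~3--4:
\begin{enumerate}
\item If $\P_\lambda(\mathcal{C}(2n+1,2n+1))\not\to1$, duality gives a dual top--bottom crossing of the box with probability at least $\delta>0$ along a subsequence $n_j$.
\item The dual configuration inherits~\eqref{h2}--\eqref{h4}, and Theorem~\ref{lemma:rsw} holds scale by scale. So at each $n_j$ there is a dual circuit in an annulus of size comparable to $n_j$ with probability at least $\psi(\delta)^4$.
\item Thin the subsequence so that consecutive scales differ by a factor at least $9$, and iterate~\eqref{eq:covariance} as in Lemma~\ref{lemma:circ}. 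The origin is then surrounded by a dual circuit in one of $\ell$ such annuli with probability at least $1-(1-\psi(\delta)^4)^{\ell}-\ell\, r_{\lfloor n_{j_0}/2\rfloor}$.
\item This is arbitrarily close to one, which forces $\theta(\lambda)=0$.
\end{enumerate}

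The paper instead stays in the primal model. It gets $\inf_n\P_\lambda(\mathcal{C}(n,n))>0$ from arms of the origin (Lemma~\ref{lemma:lower_bound_box_crossing}). RSW plus decoupling then give a primal circuit between $[-\lfloor\sqrt n\rfloor,\lfloor\sqrt n\rfloor]^2$ and $[-n,n]^2$ with high probability. Ergodicity and the square-root trick give arms from the inner box to the left and right faces, and FKG glues everything together.

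The two routes buy different things. The paper's needs RSW only for $\P_\lambda$ itself, which fits its remark about site and continuum models; the price is ergodicity and the bound $r_n\log n\to0$. Yours is shorter, does not use ergodicity, and only needs $r_n\to0$ here, because you fix $\ell$ before sending $j_0\to\infty$. In exchange it relies on planar duality and on RSW and decoupling for the dual configuration. The paper invokes these only for Theorem~\ref{t:phase_transition}.

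You should delete Steps~1--2 and the two-scale part of Step~3, since the closing argument never uses them (uniqueness is not needed by either route).
\begin{itemize}
\item Step~1 is also unjustified as written. Your outline of Gandolfi--Keane--Russo is not a proof, and their theorem is usually stated under ergodicity with respect to horizontal and vertical translations separately. That is not obviously provided by~\eqref{h2}.
\item The ``rescaling'' of a connection inside $B_{N(m,\varepsilon)}$ into a crossing of a fixed-aspect-ratio rectangle has no meaning without scale invariance.
\end{itemize}

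Some corrections within Steps~3--4:
\begin{itemize}
\item FKG gluing of square crossings alone does not produce circuits, so the RSW theorem is indispensable. It is its scale-by-scale form that lets you work along a subsequence.
\item The scales in Step~4 must be drawn from that subsequence, not chosen as arbitrary geometric scales.
\item The gap annuli $[-4n,4n]^2\setminus[-3n,3n]^2$ of~\eqref{eq:covariance} are what separate consecutive circuit annuli; they are not where the circuits live.
\item You obtain a dual circuit at some scale with high probability, not at every scale.
\end{itemize}
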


When combined, the two propositions above allow one to prove that the finite-size criteria~\eqref{eq:fsc} is a necessary and sufficient condition for percolation to occur (see the proof of Theorem~\ref{t:no_percolation}).  

\section{Proof ingredients}

The key result we use in the proofs is a general RSW estimate developed by K{\"o}hler-Schindler and Tassion~\cite{general_RSW}, stated below.
\begin{theorem}[RSW~\cite{general_RSW}]\label{lemma:rsw}
    Under hypotheses~\eqref{h2} and~\eqref{h3}, there exists a non-decreasing homeomorphism $\psi: [0,1] \to [0,1]$ such that
    \begin{equation}
        \P_{\lambda}\big( \mathcal{C}(3n,n) \big) \geq \psi \big( \P_{\lambda}\big( \mathcal{C}(n,n) \big) \big)
    \end{equation}
\end{theorem}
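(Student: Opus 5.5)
Theorem~\ref{lemma:rsw} is quoted from~\cite{general_RSW}, and the paper does not prove it. If I had to reproduce it, I would argue in two stages. First, a \emph{gluing} stage reduces the statement to a bound for some rectangle of aspect ratio strictly larger than one. Second, a \emph{core} geometric stage produces such a bound from the square-crossing probability $p=\P_\lambda(\mathcal{C}(n,n))$, using only the symmetries in~\eqref{h2} and positive association~\eqref{h3}.

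\textbf{Gluing.} Under~\eqref{h3}, increasing events are positively correlated, and~\eqref{h2} lets us move crossing events freely by translations, rotations and reflections.
\begin{itemize}
\item The event $\mathcal{C}(3n,n)$ contains the intersection of three events: horizontal crossings of $[0,2n]\times[0,n]$ and of $[n,3n]\times[0,n]$, and a vertical crossing of $[n,2n]\times[0,n]$. The vertical crossing meets both horizontal ones, so their union contains a left-right crossing of the long box.
\item By FKG this gives $\P_\lambda(\mathcal{C}(3n,n))\ge \P_\lambda(\mathcal{C}(2n,n))^2\, p$.
\item Iterating the same construction, a lower bound for any fixed aspect ratio $1+\delta>1$ yields one for $2n\times n$. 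Integer rounding is absorbed by monotonicity of crossing events in the box shape, at the cost of replacing $n$ by a comparable scale.
\end{itemize}
It then suffices to find an increasing homeomorphism $\psi_0$ with $\P_\lambda(\mathcal{C}(\lfloor(1+\delta)n\rfloor,n))\ge\psi_0(p)$ for a universal $\delta$.

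\textbf{Core step.} In the square $[0,n]^2$, consider for $t\in[0,n]$ the increasing event $E_t$ that the left side is connected inside the square to the segment $\{n\}\times[0,t]$. Its probability $\phi(t)$ is non-decreasing in $t$, equals $0$ at $t=0$ (up to a lattice-boundary term), and equals $p$ at $t=n$. By the reflection symmetry $y\mapsto n-y$ and a union bound, $\phi(n/2)\ge p/2$ essentially. I would then choose a threshold $\alpha$ where $\phi$ crosses a level such as $p/4$ and split into two cases.
\begin{itemize}
\item \emph{Case 1: $\alpha$ is small}, so a crossing landing on a short lower segment of the right side is likely. Reflecting this event across the line $x=n$ gives a crossing of the mirror square that lands near the same spot. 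A short vertical or "hourglass" connection, whose probability is controlled by symmetry, then joins the two paths. Via FKG this produces a crossing of a box of width close to $2n$, with probability a polynomial in $p$.
\item \emph{Case 2: $\alpha$ is of order $n$.} Crossings landing near the middle of the right side still carry probability bounded below. I would shift the square by a small amount and use the square-root trick to glue overlapping crossings, which again gives an aspect ratio $1+\delta$.
\end{itemize}

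\textbf{Main obstacle.} The hard part is this dichotomy. It must be carried out with no independence and no model-specific input, so every constant has to depend on $p$ alone and not on $n$ or on $\lambda$. The discreteness of $\Z^2$ also breaks exact continuity of $\phi$ in $t$, which has to be repaired by comparing consecutive lattice positions. This is precisely the contribution of~\cite{general_RSW}. I would follow their scheme: compare left-to-segment crossing events with their reflected versions, and choose $\alpha$ so that one of two explicit FKG-combinations always succeeds.
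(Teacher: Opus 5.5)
You are right that the paper gives no proof here: the statement is imported from K\"ohler-Schindler and Tassion~\cite{general_RSW}, and your proposal ultimately rests on the same citation. Your gluing stage is correct and standard. It uses three-box gluing and the iteration $a\mapsto 2a-1$ of the aspect ratio with overlap squares of side exactly $n$, so no change of scale is needed. The problem is the core step, which is the entire content of the theorem. As sketched, neither case of your dichotomy gives a crossing whose probability is bounded in terms of $p=\P_\lambda(\mathcal{C}(n,n))$ alone.

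In Case~1, take a path from the left side of $[0,n]^2$ ending on $\{n\}\times[0,\alpha]$ and its mirror image ending on the same segment; the two need not share a vertex. The natural events that force a junction, for instance an open half-circuit around $(n,0)$ in a half-annulus with radii of order $\alpha$, have probabilities governed by crossing estimates at scale $\alpha$, not by $p$. So ``controlled by symmetry'' is exactly the step that fails. Your dichotomy resembles Tassion's original hourglass argument, and this is the point where that argument has to compare the threshold $\alpha$ at two different scales. It also needs an additional decoupling input to find favourable scales, which is why it does not give a bound at a fixed scale $n$.

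In Case~2, horizontal crossings of $[0,n]^2$ and of its translate by $(\delta n,0)$ need not intersect, even if both end in the middle portions of the relevant sides. Inside the overlap $[\delta n,n]\times[0,n]$ one can run entirely above the other. The natural way to force an intersection is a top--bottom crossing of this overlap, which is a crossing in the hard direction, so the argument is circular. The square-root trick does not help. It only says that one of finitely many increasing events has large probability and never produces intersections. Using it to localize endpoints on a short subsegment brings you straight back to the junction problem of Case~1.

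So, read as a proof, the proposal has a genuine gap at its core. A single-scale mechanism that performs this gluing using only~\eqref{h2} and~\eqref{h3} is precisely what~\cite{general_RSW} supplies. The statement should be justified by that citation, as the paper does.
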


Furthermore, let us recall the square-root trick, an easy consequence of the FKG inequality (see Grimmett~\cite{grimmett}). Let $A_{1}, \dots, A_{n}$ be increasing events and $A = \cup_{i=1}^{n} A_{i}$. Then
\begin{equation}\label{eq:srt}
    \max_{1 \leq i \leq n} \P_{\lambda}(A_{i}) \geq 1- \big( 1- \P_{\lambda}(A) \big)^{\frac{1}{n}}.
\end{equation}

Finally, Cauchy's condensation test is a classical result regarding convergence of series of non-negative decreasing terms: if $x_{n}$ is a decreasing sequence of positive numbers converging to zero, then, for every integer $k \geq 2$,
\begin{equation}\label{eq:cauchy_condensation}
    \sum_{n=1}^{\infty} x_{n} \text{ converges if, and only if, } \sum_{n=1}^{\infty} k^{n} x_{k^{n}} \text{ converges.}
\end{equation}

\section{Proof of Proposition~\ref{prop:fsc}}

The proof of Proposition~\ref{prop:fsc} is based on a simple renormalization procedure.

Consider the sequence of scales $L_{k}$, by setting $L_{0} = \ell$ and $L_{k+1}=3L_{k}$, so that $L_{k} = 3^{k}\ell$, where $\ell$ will be a large integer chosen later in Lemma~\ref{lemma:recursive_inequality}.

Consider the events $\mathcal{C}(3L_{k}, L_{k})$ and define, for $k \geq 0$,
\begin{equation}
    p_{k} = \P_{\lambda} \big( \mathcal{C}(3L_{k}, L_{k})^{c} \big) = 1- \P_{\lambda} \big( \mathcal{C}(3L_{k}, L_{k})\big).
\end{equation}

\begin{lemma}\label{lemma:recursion}
    For every $k \geq 0$ and any choice of $\ell$,
    \begin{equation}
        p_{k+1} \leq 49(p_{k}^{2}+r_{L_{k}}).
    \end{equation}
\end{lemma}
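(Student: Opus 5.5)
We prove the recursion by a standard renormalization. The goal is to exhibit a family of crossings at scale $L_k$ such that their joint occurrence forces a crossing at scale $L_{k+1}$. Failure at scale $k+1$ then forces two ``well-separated'' failures at scale $k$, and the decoupling bound \eqref{eq:covariance} controls the probability of each such pair.

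\emph{Step 1: the deterministic covering.} In the box $[0,9L_k]\times[0,3L_k]$, consider the following two families of rectangles.
\begin{itemize}
\item Horizontal rectangles: translates $[iL_k, iL_k+3L_k]\times[jL_k, jL_k+L_k]$ of the $3L_k\times L_k$ box.
\item Vertical rectangles: translates of the rotated $L_k\times 3L_k$ box.
\end{itemize}
We choose the positions so that consecutive horizontal rectangles overlap in an $L_k\times L_k$ square, and vertical rectangles are placed across these overlaps. Concretely, we use a left--right chain of $7$ horizontal rectangles along a fixed row. Taking the three rows $j\in\{0,1,2\}$ and the vertical rectangles gives a family of at most $49$ translated or rotated copies $R_1,\dots,R_N$.

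The combinatorial fact we need is the following. If every horizontal rectangle in some row is crossed lengthwise, and every vertical rectangle joining them is crossed, then by planarity a horizontal crossing of one rectangle must meet the vertical crossing of an overlapping one. Chaining these intersections produces a left--right crossing of $[0,9L_k]\times[0,3L_k]$. More robustly, we will arrange the family so that if no two \emph{disjoint, well-separated} rectangles both fail, a crossing is still forced. That is, we need: any set of failures whose members are pairwise close can be avoided by using a different row. Hence $\mathcal C(3L_{k+1},L_{k+1})^c$ is contained in the union, over pairs $(R_a,R_b)$ at distance at least of order $L_k$, of the event that both $R_a$ and $R_b$ are not crossed. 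This bookkeeping gives at most $49$ pairs, counting $7\times 7$ choices.

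\emph{Step 2: probability of each pair.} By \eqref{h2}, each non-crossing event has probability $p_k$, since rotations and translations preserve $\P_\lambda$. For a separated pair, we translate so that $R_a\subset[-3L_k,3L_k]^2$ and $R_b$ lies outside $[-4L_k,4L_k]^2$; this requires the pair to be chosen far enough apart in the construction. Then \eqref{eq:covariance} gives
\[
\P_\lambda(R_a^c\cap R_b^c)\le p_k^2+r_{L_k}.
\]
A union bound then yields $p_{k+1}\le 49(p_k^2+r_{L_k})$.

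\emph{Main obstacle.} The delicate step is the geometric one: arranging the rectangles so that a single local failure, or several failures clustered near each other, can be bypassed, while any two failures that cannot be bypassed are separated enough for \eqref{eq:covariance} to apply. I would try the following. Use the three disjoint rows of height $L_k$ inside $[0,9L_k]\times[0,3L_k]$. A failure within a horizontal band only kills that row, and the vertical $L_k\times3L_k$ rectangles connect the rows. If the rows are too close for the $4L_k$ separation, I would instead rescale the construction so that the rectangles used come from distinct rows spaced appropriately. I would also check whether the constant $49$ forces exactly a $7\times7$ pair count, adjusting the layout until the union bound matches.
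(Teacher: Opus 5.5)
\textbf{There is a genuine gap in Step 1.} Your Step 2 is the same as the paper's second step. But Step 1 is the entire content of the lemma, it is never carried out, and the layout you sketch cannot supply it.

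You need the deterministic implication ``no crossing at scale $k+1$ $\Rightarrow$ two separated rectangles fail''. You only assert that clustered failures ``can be avoided by using a different row''. As long as all rectangles lie in $[0,9L_k]\times[0,3L_k]$, every $L_k\times 3L_k$ connector spans the full height and is shared by all three rows. So switching rows never bypasses a failed connector.

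Concretely, take your horizontal rectangles $[iL_k,(i+3)L_k]\times[jL_k,(j+1)L_k]$ and connectors $[iL_k,(i+1)L_k]\times[0,3L_k]$, with all edges open except a closed dual path. Let that path descend the box in a zigzag, sweeping horizontally from $x\approx L_k$ to $x\approx 3L_k$ once inside each row. It forces $\mathcal{C}(3L_{k+1},L_{k+1})^c$. Yet the only rectangles it prevents from being crossed are the two \emph{adjacent} connectors over $[L_k,2L_k]$ and $[2L_k,3L_k]$. So the implication is false for that layout. In addition, adjacent rows are at distance zero. ``Rescaling'' is not available either, because the scale-$k$ rectangles are fixed. (Overlaps in $L_k\times L_k$ squares give only four horizontal rectangles per row; the $7$ in $49=7\times 7$ is four horizontal rectangles plus three connectors.)

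\textbf{The missing idea.} The paper uses two \emph{disjoint} families of seven rectangles, each of which alone forces the crossing.

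The first family has two parts:
\begin{itemize}
\item the four horizontal rectangles $(2jL_k,0)+[0,3L_k]\times[0,L_k]$, $j=0,\dots,3$;
\item three connectors $(2iL_k,0)+[0,L_k]\times[-2L_k,L_k]$, $i=1,2,3$, which stick out \emph{below} the big box.
\end{itemize}
A top-to-bottom crossing of such a connector, stopped at its first visit to $\{y=0\}$, crosses the overlap square vertically. Hence the concatenated path stays in $[0,9L_k]\times[0,L_k]$.

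The second family is the mirror image along the top strip $[0,9L_k]\times[2L_k,3L_k]$, with connectors sticking \emph{up} into $\{2L_k\le y\le 5L_k\}$.

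On $\mathcal{C}(3L_{k+1},L_{k+1})^c$, some rectangle of each family fails. Every rectangle of the first family is separated from every rectangle of the second by the strip $\{L_k<y<2L_k\}$; this is the separation fed into \eqref{eq:covariance}. The union bound then runs over exactly $7\times 7=49$ pairs, and there is no need to argue that clusters of failures can be bypassed. Letting the connectors leave the big box in opposite directions is exactly what your construction lacks.
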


\begin{proof}
    Start by considering the collection of boxes
    \begin{equation}\label{eq:first_collection}
      \begin{split}
        B_{j} = (2jL_{k},0) + [0,3L_{k}] & \times [0,L_{k}], \text{ for } j = 0,1,2,3 \text{ and } \\
        B_{j} = (2(j-3)L_{k},0) + [0,L_{k}] & \times [-2L_{k}, L_{k}], \text{ for } j=4,5,6.
      \end{split}
    \end{equation}
    If all of the boxes above are crossed in the longer direction, one can concatenate such crossings and obtain a crossing of the larger rectangle $[0, 3L_{k+1}] \times [0, L_{k+1}]$, effectively implying that the event $\mathcal{C}(3L_{k+1}, L_{k+1})$ holds.

    The same reasoning can be applied to the collection of boxes given by
    \begin{equation}
      \begin{split}
       \tilde{B}_{j} = (2jL_{k},0) + [0,3L_{k}] & \times [2L_{k},3L_{k}], \text{ for } j = 0,1,2,3 \text{ and } \\
        \tilde{B}_{j} = (2(j-3)L_{k},0) + [0,L_{k}] & \times [2L_{k}, 5L_{k}], \text{ for } j=4,5,6.
      \end{split}
    \end{equation}

    Second, given a pair of boxes $B_{j}$ and $\tilde{B}_{i}$, the crossing events of these boxes have probability $p_{k}$ and the correlation between them can be bound using \eqref{eq:covariance}, yielding the bound
    \begin{equation}
        \P_{\lambda}( B_{j} \text{ and } \tilde{B}_{i} \text{ are crossed in the hard direction} ) \leq p_{k}^{2} + r_{L_{k}}.
    \end{equation}

    Finally, union bound now gives
    \begin{equation}
        p_{k+1} \leq \sum_{i,j=0}^{6} 
        \P_{\lambda}( B_{j} \text{ and } \tilde{B}_{i} \text{ are crossed in the hard direction} ) \leq 49(p_{k}^{2} + r_{L_{k}}),
    \end{equation}
    concluding the proof.
\end{proof}

\begin{lemma}\label{lemma:recursive_inequality}
    There exists $n_{0}$ such that, if $p_{0} \leq \frac{1}{100}$ for some choice of $\ell \geq n_{0}$, then
    \begin{equation}
        \sum_{k \geq 0} p_{k} < \infty.
    \end{equation}
\end{lemma}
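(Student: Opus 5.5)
We will prove, by induction on $k$, a bound of the form $p_{k} \leq \epsilon_{k}$ for an explicit summable sequence $\epsilon_{k}$. The natural ansatz is
\begin{equation}
    \epsilon_{k} = \frac{1}{100} \cdot 2^{-k} + C \sum_{j \leq k-1} 2^{-(k-1-j)} \, r_{L_{j}},
\end{equation}
or, more simply, we will aim to show that $q_{k} := 100 \cdot 49\, p_{k}$ satisfies $q_{k+1} \leq \tfrac{1}{2} q_{k} + a_{k}$ whenever $q_{k} \leq 1$. Here $a_{k} = 4900 \cdot 49\, r_{L_{k}}$. The starting point is Lemma~\ref{lemma:recursion}: $p_{k+1} \leq 49 p_{k}^{2} + 49 r_{L_{k}}$. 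If $p_{k} \leq \frac{1}{100}$, then $49 p_{k}^{2} \leq \frac{49}{100} p_{k} \leq \frac{1}{2} p_{k}$. This gives the linear recursion
\begin{equation}
    p_{k+1} \leq \tfrac12 p_{k} + 49\, r_{L_{k}},
\end{equation}
valid as long as $p_{k} \leq \frac{1}{100}$.

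The first step is to choose $n_{0}$ so that the perturbation terms are small. By Cauchy condensation~\eqref{eq:cauchy_condensation} with $k=3$, applied to the non-increasing sequence $r_{n}/n$ (if $r_n$ is non-increasing, so is $r_n/n$), hypothesis~\eqref{h4} implies $\sum_{k} 3^{k} \cdot r_{3^{k}}/3^{k} = \sum_{k} r_{3^{k}} < \infty$. We need this along $L_{k} = 3^{k}\ell$ for arbitrary $\ell$. Using monotonicity, $r_{3^{k}\ell} \leq r_{3^{k+m}}$, where $3^{m} \leq \ell < 3^{m+1}$. Hence
\begin{equation}
    \sum_{k \geq 0} r_{L_{k}} \leq \sum_{j \geq m} r_{3^{j}},
\end{equation}
which is a tail of a convergent series and tends to $0$ as $\ell \to \infty$. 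We then pick $n_{0}$ so that, for $\ell \geq n_{0}$, $98 \sum_{k\geq0} r_{L_{k}} \leq \frac{1}{100}$.

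Second, we run the induction showing that $p_{k} \leq \frac{1}{100}$ for all $k$. The claim is
\begin{equation}
    p_{k} \leq 2^{-k} p_{0} + 49 \sum_{j=0}^{k-1} 2^{-(k-1-j)} r_{L_{j}}.
\end{equation}
This follows directly from the linear recursion, provided every earlier $p_{j} \leq \frac{1}{100}$. It is also bounded by $\frac{1}{200} + 98 \sum_j r_{L_j}$, which is at most $\frac{1}{100}$ if $p_0\le 1/200$; to handle $p_0\le 1/100$ exactly we will sharpen the constant, e.g. use $49p_k^2\le 0.49p_k$ to get contraction factor $0.49$ and require $\sum$ small enough that $0.49\cdot\frac1{100}+49r\le\frac1{100}$ at each step. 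Concretely, we will prove $p_{k}\le \frac1{100}$ inductively via $p_{k+1}\le 0.49\,p_k+49r_{L_k}\le \frac{1}{100}$, which holds once $49 r_{L_k}\le 0.0051$ for all $k$; this is guaranteed by the choice of $n_0$. Summing the linear recursion over $k$ then gives
\begin{equation}
    \sum_k p_k \le \frac{1}{0.51}\Big(p_0 + 49\sum_k r_{L_k}\Big) < \infty.
\end{equation}

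The main obstacle is the first step. We must transfer the summability in~\eqref{h4} to summability of $r_{L_{k}}$ along the geometric scales $3^{k}\ell$, with smallness that is uniform once $\ell$ is large. The condensation test together with the monotonicity of $r_{n}$ handles this.
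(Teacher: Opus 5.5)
Your argument is correct and follows essentially the paper's route. Both use induction to keep $p_k \le \frac{1}{100}$, so that Lemma~\ref{lemma:recursion} linearizes to $p_{k+1} \le \frac12 p_k + 49 r_{L_k}$, and both then get $\sum_k r_{L_k} < \infty$ from Cauchy condensation plus monotonicity of $r_n$. The remaining differences are cosmetic. You choose $n_0$ by making the tail $\sum_{j \ge m} r_{3^j}$ small, whereas the paper only needs $r_n \le 10^{-4}$ for $n \ge n_0$, which suffices since $r_{L_k} \le r_\ell$. You also sum the linear recursion directly via partial sums rather than iterating it with $i = \lfloor k/2 \rfloor$. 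The exploratory $q_k$ ansatz and the $p_0 \le \frac{1}{200}$ detour can simply be deleted.
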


\begin{proof}
    Let $n_{0}$ such that
    \begin{equation}
       r_{n} \leq \frac{1}{10000},
    \end{equation}
    for all $n \geq n_{0}$.

    Assume that $p_{0} \leq \frac{1}{100}$, for some $\ell \geq n_{0}$. Let us prove by induction that $p_{k} \leq \frac{1}{100}$, for all $k \geq 0$. Indeed, if the inequality holds for $k$, Lemma~\ref{lemma:recursion} implies
    \begin{equation}
        p_{k+1} \leq 49 (p_{k}^{2}+r_{L_{k}}) \leq 49 \Big( \frac{1}{10000}+\frac{1}{10000} \Big)  \leq \frac{1}{100}.
    \end{equation}
    
    Combining the above with Lemma~\ref{lemma:recursion} implies
    \begin{equation}
        p_{k+1} \leq \frac{1}{2}p_{k-1} + 49 r_{L_{k}},
    \end{equation}
    for all $k \geq 0$. Inductively applying the last inequality yields, for $i < k$,
    \begin{equation}
        p_{k+1} \leq \frac{1}{2^{i}}p_{k-i} + 49\sum_{j=1}^{i} \frac{1}{2^{j-1}}r_{L_{k-j+1}} \leq \frac{1}{2^{i}} + 98 r_{L_{k-i+1}}.
    \end{equation}
    Choosing $i = \lfloor \frac{k}{2} \rfloor$, we obtain
    \begin{equation}
        \sum_{k=0}^{\infty} p_{k} \leq 2\sum_{i=0}^{\infty} \frac{1}{2^{i}} + 98 \times 2 \sum_{i=0}^{\infty} r_{L_{i}}.
    \end{equation}
    To conclude, notice that the last sum above is finite by Cauchy's condensation test~\eqref{eq:cauchy_condensation} and hypothesis~\eqref{h4} (here we use that $\frac{r_{n}}{n}$ is decreasing).
\end{proof}

We are now in position to prove Proposition~\ref{prop:fsc}.
\begin{proof}[Proof of Proposition~\ref{prop:fsc}]
    Let us first verify that the statement holds if one takes $\uc{c:fsc} = \frac{1}{100}$ and $n \geq n_{0}$ given by Lemma~\ref{lemma:recursive_inequality}. In this case, Lemma~\ref{lemma:recursive_inequality} implies
    \begin{equation}
        \sum_{k=1}^{\infty} 7p_{k} < \infty.
    \end{equation}
    
    Borel-Cantelli lemma applied to the collection of boxes in~\eqref{eq:first_collection}, for all values of $k \geq 1$, implies that all but finitely many of them are crossed. Concatenating such crossings, one constructs an infinite open path, implying $\theta(\lambda)>0$. 

    To conclude the proof, it remains to remove the hypothesis that $n \geq n_{0}$, by possibly decreasing the value of $\uc{c:fsc}$. Indeed, by concatenating at most $2n_{0}$ boxes arranged analogously to~\eqref{eq:first_collection}, it follows that, it follows that, for every $n < n_{0}$,
    \begin{equation}
        \P_{\lambda} \big( \mathcal{C}(3n_{0}, n_{0}) \big) \geq \P_{\lambda} \big( \mathcal{C}(3n, n) \big)^{2n_{0}},
    \end{equation}
    In particular, if $\uc{c:fsc} \leq 1-(1-\frac{1}{100})^{\frac{1}{2n_{0}}}$, then
    \begin{equation}
        \P_{\lambda} \big( \mathcal{C}(3n_{0}, n_{0}) \big) \geq (1-\uc{c:fsc})^{2n_{0}} \geq 1-\frac{1}{100},
    \end{equation}
    from which one can carry the same argument presented previously. The proof is complete by taking $\uc{c:fsc} = \min \big\{1-(1-\frac{1}{100})^{\frac{1}{2n_{0}}}, \frac{1}{100} \big\}$.
\end{proof}

\section{Proof of Proposition~\ref{prop:crossing_to_one}}

\begin{lemma}\label{lemma:lower_bound_box_crossing}
    Assume~\eqref{h2},~\eqref{h3}, and $\theta(\lambda)>0$. Then
    \begin{equation}
        \inf_{n \geq 0} \P_{\lambda}\big( \mathcal{C}(n,n) \big) >0.
    \end{equation}
\end{lemma}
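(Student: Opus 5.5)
Assume $\theta(\lambda)>0$. The plan is to show that every $n\times n$ box is crossed with probability bounded below uniformly in $n$, using only stationarity and symmetries \eqref{h2} and FKG \eqref{h3}. The natural mechanism is that the infinite cluster must exit a box through one of its four sides. Fix $n$ and consider the box $\Lambda_n=[0,n]^2$. If $0 \leftrightarrow \infty$ and the origin is placed suitably, say at the midpoint of a side or at the center, then the open path from $0$ exits the box. The aim is to turn ``some open path from inside the box reaches its boundary'' into a genuine left--right crossing, combining the square-root trick \eqref{eq:srt} with symmetry.

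\textbf{Key steps.} First, by translation invariance, $\P_\lambda(x\leftrightarrow\infty)=\theta(\lambda)$ for every $x$. Second, I would work with the box $[-n,n]^2$ of side $2n$ and the event that the origin is connected to $\partial[-n,n]^2$. This event has probability at least $\theta(\lambda)$ and is the union of four increasing events, one for each side. By \eqref{eq:srt} and rotation invariance, each of these four events has probability at least $1-(1-\theta)^{1/4}=:c>0$.

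Third, connecting to a side is not yet a crossing, so a more robust device is needed. The planned approach is a duality-type argument in the rectangle $R=[0,n]\times[0,n]$. Consider the events $L$ that some vertex of the left side $\{0\}\times[0,n]$ is connected to infinity, and $R'$ that some vertex of the right side is connected to infinity. Since $\theta>0$ and ergodicity \eqref{h2} gives an infinite cluster almost surely, the probability of $L$ is at least $\theta$. Rather than working with the left and right sides directly, I would instead use the following observation. On the event that both the bottom-left and bottom-right corner regions are connected to infinity through paths that avoid leaving via unwanted sides, combined with uniqueness of the infinite cluster, one obtains a connection. Uniqueness requires finite energy, however, which is not assumed.

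\textbf{Main obstacle and fallback.} I therefore expect to avoid uniqueness, and the simplest robust route is a geometric one. Take $x=(0,\lfloor n/2\rfloor)$ on the left side of $[0,n]^2$. If $x\leftrightarrow\infty$, the path from $x$ exits $[0,n]^2$ either through the right side or through the top or bottom, or it goes back left. The cleaner substitute is to use the square $[-n,n]^2$ and the origin at its center. The event that $0$ is connected to the right side of $[-n,n]^2$ by a path inside $[-n,n]^2$ has probability at least $c$. By reflection symmetry in the vertical axis, the same holds for the left side. By FKG, both hold simultaneously with probability at least $c^2$, and concatenating the two paths through $0$ gives a left--right crossing of the $2n\times 2n$ square $[-n,n]^2$.

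This yields $\P_\lambda(\mathcal C(2n,2n))\ge c^2$. For odd sizes, monotonicity in box size is not automatic, but one can repeat the argument with a box $[-n,n+1]\times[-n,n]$, or note that a crossing of $[0,n]\times[0,n]$ is implied by the above up to adjusting the center. The genuinely delicate point is ensuring that the path from $0$ to the boundary of $[-n,n]^2$ stays inside the box until it first hits the chosen side. This is achieved by defining the four side events as ``the first exit is through that side'' being replaced by ``there is an open path inside the box from $0$ to that side.'' Those latter events are increasing, their union still contains $\{0\leftrightarrow\infty\}$, and they are mapped to each other by rotations, which is exactly what \eqref{eq:srt} requires.
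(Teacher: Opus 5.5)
Your core argument is the paper's. The four events that $0$ is joined to a given side of $[-n,n]^2$ by an open path inside $[-n,n]^2$ are rotations of one another, and their union contains $\{0\leftrightarrow\infty\}$. So each has probability at least $c=1-(1-\theta(\lambda))^{1/4}$. The paper uses a union bound and gets $\theta(\lambda)/4\le c$, which is an immaterial difference. FKG applied to the left and right events then gives a left--right crossing of a square of side $2n$ with probability at least $c^2$. The detours through ergodicity, uniqueness and finite energy are not needed.

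The one incomplete point is the other parity, and neither of your suggestions works as stated. A square of odd side has no vertex at its center, so you cannot ``adjust the center''. Literally repeating the argument in $S=[-n,n+1]\times[-n,n]$ also fails: the four side events of $S$ seen from $0$ are not images of one another under symmetries fixing $0$, so \eqref{eq:srt} only controls the largest of them. The missing observation is short:
\begin{itemize}
\item The left event for $S$ contains the left event for $[-n,n]^2$, so it has probability at least $c$.
\item Let $R$ be the right event, an open path inside $S$ from $0$ to $\{n+1\}\times[-n,n]$. Its three rotations about $0$ are the events of reaching, from $0$ and inside the respective box, the top side of $[-n,n]\times[-n,n+1]$, the left side of $[-n-1,n]\times[-n,n]$, and the bottom side of $[-n,n]\times[-n-1,n]$.
\item On $\{0\leftrightarrow\infty\}$, take an open path from $0$ and stop it at its first vertex outside $[-n,n]^2$. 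It stays in $[-n,n]^2$ plus one vertex of the outer layer, hence realises one of these four events. By \eqref{h2} and \eqref{eq:srt}, $\P_\lambda(R)\ge c$.
\item FKG then gives a left--right crossing of $S$, hence of the square $[-n,n+1]^2$, with probability at least $c^2$.
\end{itemize}
For comparison, the paper handles this parity by ``opening a final extra edge''. That step is not justified by \eqref{h2}--\eqref{h3} alone, since it implicitly needs a finite-energy property. So your box-based route, completed as above, is the sounder one.
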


\begin{proof}
    Let us first prove that $\P_{\lambda}\big( \mathcal{C}(2k+1,2k+1) \big)$ is uniformly positive over the choice of $k$, by considering crossings of the box $[-k,k]^{2}$.

    Define by $R_{k}$ (respectively, $T_{k}$, $L_{k}$, and $B_{k}$) the event where the origin is connected by an open path contained in $[-k,k]^{2}$ to the right face (respectively, top, left face, and bottom ) of the box $\{k\} \times [-k,k]$. Notice that rotation invariance implies
    \begin{equation}
        \P_{\lambda}(R_{k}) = \P_{\lambda}(T_{k}) = \P_{\lambda}(L_{k}) = \P_{\lambda}(B_{k}) \geq \frac{\theta(\lambda)}{4}.
    \end{equation}

    We conclude by applying the FKG inequality below
    \begin{equation}
        \P_{\lambda}\big( \mathcal{C}(2k+1, 2k+1) \big) \geq \P_{\lambda} (L_{k} \cap R_{k}) \geq \frac{\theta(\lambda)^{2}}{16},
    \end{equation}
    implying the statement for odd values of $n$.

    As for even values, simply notice that we can obtain a crossing of $[0,2k]^{2}$ by first crossing $[0,2k-1]^{2}$ and then opening a final extra edge. 
\end{proof}

For $m < n$, denote by $\Circ(m,n)$ the event
\begin{equation}\label{eq:circ}
    \Circ(m,n) =
    \bigg\{
      \begin{array}{c}
        \text{there exists an open circuit surrounding } [-m.m]^{2} \\
        \text{entirely contained in } [-n,n]^{2} 
      \end{array}
    \bigg\}.
\end{equation}

\nc{c:circ}
\begin{lemma}\label{lemma:circ}
    Assume~\eqref{h2},~\eqref{h3},~\eqref{h4}, and $\theta(\lambda)>0$. There exist $\uc{c:circ}>0$ and $\alpha>0$ such that, for any $m \leq n$,
    \begin{equation}
        \P_{\lambda}\big( \Circ(m,n)^{c} \big) \leq \uc{c:circ} \Big( \Big(\frac{n}{m} \Big)^{-\alpha} + r_{m}\log \frac{n}{m} \Big).
    \end{equation}
    In particular,
    \begin{equation}\label{eq:circ_limit}
        \lim_{k \to \infty} \P_{\lambda} \big( \Circ( \lfloor \sqrt{k} \rfloor, 2k+1)^{c} \big) = 0.
    \end{equation}
\end{lemma}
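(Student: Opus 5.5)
Since $\theta(\lambda)>0$, Lemma~\ref{lemma:lower_bound_box_crossing} gives $\delta := \inf_{n} \P_{\lambda}(\mathcal{C}(n,n)) > 0$. Theorem~\ref{lemma:rsw} then upgrades this to long crossings: $\P_{\lambda}(\mathcal{C}(3n,n)) \geq \psi(\delta) =: \delta' > 0$ uniformly in $n$. Four rotated and translated copies of $3j\times j$ rectangles, each crossed in the long direction, can be arranged inside the annulus $[-3j,3j]^2\setminus[-j,j]^2$ so that their crossings concatenate into a circuit. By FKG~\eqref{h3} and the symmetries~\eqref{h2},
\begin{equation*}
    \P_{\lambda}\big(A_{j}\big) \geq (\delta')^{4} =: c_0 > 0
\end{equation*}
uniformly in $j$, where $A_{j}$ denotes the event that there is an open circuit in $[-3j,3j]^{2}\setminus[-j,j]^{2}$ surrounding $[-j,j]^2$.

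Next consider well-separated dyadic-type scales. Put $m_i = m\cdot 12^{i}$ for $i = 0,1,\dots,N$, with $N = \lfloor \log_{12}(n/(3m)) \rfloor$, so that all annuli $[-3m_i,3m_i]^2\setminus[-m_i,m_i]^2$ lie in $[-n,n]^2$. Each $A_{m_i}$ is contained in $\Circ(m,n)$, so $\Circ(m,n)^c \subset \bigcap_{i} A_{m_i}^c$. The event $A_{m_i}^{c}$ is supported in $[-3m_i,3m_i]^2$. The intersection $\bigcap_{j>i} A_{m_j}^c$ is supported outside $[-m_{i+1},m_{i+1}]^2 = [-12 m_i, 12 m_i]^2 \supset [-4m_i,4m_i]^2$. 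Applying~\eqref{eq:covariance} with scale $m_i$ to the two events $A_{m_i}^c$ and $\bigcap_{j>i}A^c_{m_j}$ (covariance of indicators is symmetric under complementing both, so there is no sign issue) gives
\begin{equation*}
    \P_{\lambda}\Big(\bigcap_{j\geq i} A_{m_j}^c\Big) \leq (1-c_0)\,\P_{\lambda}\Big(\bigcap_{j> i} A_{m_j}^c\Big) + r_{m_i}.
\end{equation*}
Iterating from $i=0$ up to $N$ and using that $r$ is non-increasing, so that $r_{m_i}\leq r_m$, yields
\begin{equation*}
    \P_{\lambda}\big(\Circ(m,n)^c\big) \leq (1-c_0)^{N+1} + \sum_{i=0}^{N} (1-c_0)^{i} r_{m} \leq (1-c_0)^{N+1} + (N+1)\, r_m .
\end{equation*}
Since $N \asymp \log(n/m)$, the first term is $\leq C (n/m)^{-\alpha}$ with $\alpha = -\log(1-c_0)/\log 12$. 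The second term is at most $C\, r_m \log(n/m)$ (in fact $c_0^{-1} r_m$ would do). For $n/m$ bounded, say $n<36m$, the bound is trivial once $\uc{c:circ}$ is large, since the right-hand side then exceeds $1$.

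For~\eqref{eq:circ_limit}, take $m=\lfloor\sqrt{k}\rfloor$ and $n=2k+1$. Then $(n/m)^{-\alpha}\to 0$. It remains to show $r_{\sqrt{k}}\log k \to 0$, which is where~\eqref{h4} enters; this is the main (though mild) obstacle. Since $r$ is non-increasing, for any $M$
\begin{equation*}
    r_{M}\log M \lesssim \sum_{n=\lfloor\sqrt{M}\rfloor}^{M} \frac{r_n}{n},
\end{equation*}
and the right-hand side is a tail of a convergent series, hence tends to $0$. Applying this with $M=\lfloor\sqrt k\rfloor$ gives $r_{\sqrt k}\log k = 2\,r_{\sqrt k}\log\sqrt k \to 0$.

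The points needing care are the geometric placement of the four rectangles forming $A_j$, which should be standard, and checking that the supports fit the exact boxes $[-3n,3n]^2$ and $\Z^2\setminus[-4n,4n]^2$ in~\eqref{eq:covariance}, which is why the ratio $12$ between scales is chosen generously.
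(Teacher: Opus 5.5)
Your proposal is correct and follows essentially the same route as the paper. Both arguments get a uniform lower bound $c_0$ on annulus-circuit probabilities from Lemma~\ref{lemma:lower_bound_box_crossing}, RSW and FKG, then iterate~\eqref{eq:covariance} over geometrically separated scales (you use ratio $12$, the paper $9$), and finally show $r_n\log n\to 0$ from~\eqref{h4}. Your refinements are valid: keeping the weights $(1-c_0)^i$ gives the sharper term $r_m/c_0$, and you bound $r_M\log M$ directly by the tail block $\sum_{n=\lfloor\sqrt M\rfloor}^{M} r_n/n$ instead of using Cauchy condensation. One slip: to wrap around $[-j,j]^2$ inside $[-3j,3j]^2$, the four rectangles must be $6j\times 2j$ (that is, $\mathcal{C}(3n,n)$ with $n=2j$), not $3j\times j$.
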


\begin{proof}
    The proof is split into some steps. First, notice that Lemma~\ref{lemma:lower_bound_box_crossing} combined with the RSW estimate from Lemma~\ref{lemma:rsw} yields
    \begin{equation}
        c := \inf_{n} \P_{\lambda}\big( \mathcal{C}(3n,n) \big) >0 
    \end{equation}

    Combining four crossings of rectangles in the hard direction implies
    \begin{equation}
        \P_{\lambda}\big( \Circ(k, 3k) \big) \geq c^{4},
    \end{equation}
    for every $k \geq 0$.

    Given now $m<n$, consider the sequence $k_{i}$, for $i=1, \dots, \ell$, given by $k_{1} = m$, $k_{i+1} = 9k_{i}$ and $\ell = \lfloor \log_{9} \frac{n}{3m} \rfloor$. In particular, $3k_{\ell} \leq n$ and thus
    \begin{equation}
        \P_{\lambda} \big( \Circ(m,n)^{c} \big) \leq \P_{\lambda} \Big( \bigcap_{i=1}^{\ell} \Circ(k_{i}, 3k_{i})^{c} \Big).
    \end{equation}

    We now use the estimate above together with the correlation decay estimate in~\eqref{eq:covariance}. Notice that, while the event $\cap_{i=1}^{j} \Circ(k_{i}, 3k_{i})^{c}$ is supported in $[-3k_{j}, 3k_{j}]^{2}$, $\bigcap_{i=j+1}^{\ell} \Circ(k_{i}, 3k_{i})^{c}$ has support in $\Z^{2} \setminus [-9k_{j}, 9k_{j}]^{2}$. In particular, repeated applications of~\eqref{eq:covariance} yields
    \begin{equation}
      \begin{split}
         \P_{\lambda} \big( \Circ(m,n)^{c} \big) & \leq (1-c^{4})^{\ell}+\sum_{i=1}^{\ell-1} r_{k_{i}} \\
         & \leq (1-c^{4})^{\lfloor \log_{9} \frac{n}{3m} \rfloor} + r_{m} \log_{9} \frac{n}{3m} \\
         & \leq \uc{c:circ} \Big( \Big(\frac{n}{m} \Big)^{-\alpha} + r_{m} \log \frac{n}{m} \Big),
      \end{split}
    \end{equation}
    concluding the proof of the first bound.
    
    As for~\eqref{eq:circ_limit},
    \begin{equation}
       \P_{\lambda} \big( \Circ( \lfloor \sqrt{k} \rfloor, 2k+1)^{c} \big) \leq \uc{c:circ} \bigg( \bigg(\frac{2k+1}{\lfloor \sqrt{k} \rfloor} \bigg)^{-\alpha} + r_{\lfloor \sqrt{k} \rfloor}\log \frac{2k+1}{\lfloor \sqrt{k} \rfloor} \bigg).
    \end{equation}
    To conclude, we just need to verify that~\eqref{h4} implies $r_{n} \log n \to 0$ as $n$ grows. Indeed, notice first that Cauchy's condensation test~\eqref{eq:cauchy_condensation} yields
    \begin{equation}
        \sum_{k=0}^{\infty} r_{2^{k}} = \sum_{k=0}^{\infty} 2^{k} \frac{r_{2^{k}}}{2^{k}}  < \infty.
    \end{equation}
    Now, since $r_n$ is decreasing,
    \begin{equation}
        \frac{j}{2}r_{2^{j}} \leq \sum_{k \geq \frac{j}{2}} r_{2^{k}} \to 0,
    \end{equation}
    as $j$ grows, proving the statement for the subsequence $r_{2^{j}} \log 2^{j}$. As for the whole sequence, given $n$, choose $j$ such that $2^{j} \leq n < 2^{j+1}$ and notice that
    \begin{equation}
        r_{n} \log n \leq r_{2^{j}} \log 2^{j+1} \to 0,
    \end{equation}
    concluding the proof.
\end{proof}

 We now conclude the proof of Proposition~\ref{prop:crossing_to_one}.
\begin{proof}[Proof of Proposition~\ref{prop:crossing_to_one}]
    We will prove that the probability that the square $[-n,n]^{2}$ is crossed goes to one as $k$ increases.

    First, let $A_{n}$ denote the event that there exists an infinite connected component that touches the box $[-\lfloor \sqrt{n} \rfloor, \lfloor \sqrt{n} \rfloor ]$. Similarly to Lemma~\ref{lemma:lower_bound_box_crossing}, define the events $R_{n}$, $T_{n}$, $L_{n}$, and $B_{n}$ where this infinite component leaves $[-n,n]^{2}$ by the right, top, left, and bottom faces, respectively. Once again, by rotation invariance, all these events have positive probability and $A_{n}$ is the union of these four events. In particular, the square root trick~\eqref{eq:srt} implies that
    \begin{equation}\label{eq:right_connection_to_one}
        \P_{\lambda}\big(R_{n} \big) \geq 1 - \big( 1-\P_{\lambda}(A_{n}) \big)^{\frac{1}{4}} \to 1,
    \end{equation}
    since $\P_{\lambda}(A_{n}) \to 1$, by the ergodicity assumption~\eqref{h2}.

    Notice now that
    \begin{equation}
      \begin{split}
        \P_{\lambda}\big( \mathcal{C}(2n+1,2n+1) \big) & \geq \P_{\lambda} \big( \Circ( \lfloor \sqrt{n} \rfloor, n) \cap R_{n} \cap L_{n} \big) \\
        & \geq  \Big( 1- \P_{\lambda} \big( \Circ( \lfloor \sqrt{n} \rfloor, n)^{c} \big) \Big) \P_{\lambda}\big( R_{n} \big) \P_{\lambda}\big( L_{n} \big).
      \end{split}
    \end{equation}
    The proof is now complete by combining~\eqref{eq:right_connection_to_one} and Lemma~\ref{lemma:circ}.    
\end{proof}

\section{Proof of main results}

We start with the proof of Theorem~\ref{t:no_percolation}.
\begin{proof}[Proof of Theorem~\ref{t:no_percolation}]
    Assume by contradiction that $\theta(\lambda_{c})>0$. Proposition~\ref{prop:crossing_to_one} implies that there exists $n$ large enough such that
    \begin{equation}
        \P_{\lambda_{c}}\big( \mathcal{C}(2n+1,2n+1) \big) \geq 1-\frac{\uc{c:fsc}}{2}.
    \end{equation}
    Since $\lambda \mapsto \P_{\lambda}\big( \mathcal{C}(2n+1,2n+1)$ is continuous by~\eqref{h5}, it follows that
    \begin{equation}
        \P_{\lambda_{c} - \delta}\big( \mathcal{C}(2n+1,2n+1) \big) \geq 1- \uc{c:fsc},
    \end{equation}
    for $\delta>0$ sufficiently small. Proposition~\ref{prop:fsc} now implies that $\theta(\lambda_{c}-\delta)>0$, yielding a contradiction.
\end{proof}

The proof of Theorem~\ref{t:phase_transition} relies on duality. Consider the dual graph $(\Z^{2})^{*} = \Z^{2} + (\frac{1}{2}, \frac{1}{2})$ with edges between nearest neighbors. Each edge $e \in \Z^{2}$ is crossed by an unique edge $e^{*} \in (\Z^{2})^{*}$ and vice-versa. Given a configuration $\omega$, define the dual configuration $\omega^{*}$ in $ \Z^{2} + (\frac{1}{2}, \frac{1}{2})$ by setting
\begin{equation}
    \omega^{*}(e^{*}) = 1-\omega(e).
\end{equation}

Analogously to~\eqref{eq:crossing}, define the dual vertical crossing of a box
\begin{equation}
    \mathcal{C}^{*}(n,m) = \bigg\{\begin{array}{c}
      \text{there exists a dual open path crossing of} \\
      \text{$[0,n] \times [0,m]$ from top to bottom}
    \end{array}\bigg\}.
\end{equation}
Observe that~\eqref{h2},~\eqref{h3}, and~\eqref{h4} still hold for the dual configurations.

Furthermore, for all $m,n \in \N$,
\begin{equation}\label{eq:complement}
    \mathcal{C}(n,m) = \mathcal{C}(n,m)^{c}.
\end{equation}

Let us now prove Theorem~\ref{t:phase_transition}.
\begin{proof}[Proof of Theorem~\ref{t:phase_transition}]
    Observe first that~\eqref{h6} applied to $A = \mathcal{C}(3n,n)$ immediately implies that the condition of Proposition~\ref{prop:fsc} is verified and thus $\lambda_{c}$ is well defined and strictly smaller than one.

    It remains to prove that $\lambda_{c}>0$, which will follow once we prove that there is no percolation for small enough $\lambda$.

    We will proceed in two steps. First, we prove that, if $\lambda$ is small enough, the dual configuration percolates. In fact, this follows from another application of Lemma~\ref{prop:fsc}. By~\eqref{eq:complement}, we have
    \begin{equation}
        \lim_{\lambda \to 0} \P_{\lambda} \big( \mathcal{C}*(n,3n) \big) = \lim_{\lambda \to 0} 1 - \P_{\lambda} \big( \mathcal{C}(n,3n) \big) = 1.
    \end{equation}
    In particular, Proposition~\ref{prop:fsc} implies that the dual configuration percolates of $\lambda$ small enough.

    Finally, let us prove that there is no primal percolation if the the dual percolates. Notice first that, for all $n \in \N$,
    \begin{equation}
        \theta(\lambda) \leq \P_{\lambda} \big( \Circ^{*}( \lfloor \sqrt{n} \rfloor, 2n+1)^{c} \big),
    \end{equation}
    where $\Circ^{*}(n,m)$ stands for the analogous of the event~\eqref{eq:circ} for the dual configuration. Since there is percolation of the dual lattice for $\lambda$ small enough, Lemma~\ref{lemma:circ} implies that the right-hand side of the equation above converges to zero as $n$ grows, implying $\theta(\lambda)=0$ and thus $\lambda_{c}>0$. This concludes the proof.
\end{proof}

\section{Examples}\label{sec:examples}

We now verify that the hypotheses of the main theorem are satisfied by some families of dependent percolation models, recovering results available in the literature.

\subsection{Gaussian level-field percolation}

\nc{c:corr}
\par Fix a function $q:\Z^{2} \to \R_{+}$ not identically null, invariant under reflections and rotations of the plane.
Assume further that there exists $\beta > 2$ such that
\begin{equation}\label{eq:correlation_q}
q(x) \leq \uc{c:corr}|x|^{-\beta}, \text{ for all } x \in \Z^{2}\setminus{\{o\}},
\end{equation}
and that $q(0) \leq \uc{c:corr}$, for some $\uc{c:corr}>0$.
Consider now a family $\big(W_{x}\big)_{x \in \Z^{2}}$ of i.i.d.\ standard $\mathcal{N}(0,1)$ random variables and define the random Gaussian field $( g_{x} )_{x \in \Z^{2}}$ via
\begin{equation}\label{eq:construction_gf}
g_{x} = \sum_{y}q(x-y)W_{y}.
\end{equation}

\begin{remark}
Sharpness of phase transition for these models was established by Muirhead~\cite{m}. As we prove in the following, Theorems~\ref{t:no_percolation} and~\ref{t:phase_transition} can be applied if $\beta> 3$, partially recovering Muirhead's result.
\end{remark}

Given the Gaussian field $g$, and $\lambda \in \R$, consider the percolation configuration given by
\begin{equation}
\label{eq:field}
\omega_{\lambda}(x) = \textbf{1}_{\{g_{x} \leq \lambda \}}
\end{equation}
Denote by $\P_{\lambda}$ the distribution of $\omega_{\lambda}$ and observe that it trivially satisfies ~\eqref{h1},~\eqref{h5}, and~\eqref{h6}. The only non-trivial assumptions are~\eqref{h2},~\eqref{h3}, and~\eqref{h4}. While the FKG inequality follows from the fact that the field we consider is positively correlated,~\eqref{h3} and~\eqref{h4} actually rely on hypotheses on the decay of the covariances of the field.

The FKG inequality follows from the following lemma, establishing~\eqref{h3}.
\begin{proposition}[FKG inequality for Gaussian fields~\cite{pitt1982}]
  \label{prop:FKG}
  Let $A$ and $B$ be two increasing events depending on finitely many coordinates of $g$. Then
  \begin{equation}\label{e:fkg_gf}
    \tag{FKG}
    \P \big( g \in A \cap B\big) \geq \P \big( g \in A \big) \P \big( g \in B \big).
  \end{equation}
\end{proposition}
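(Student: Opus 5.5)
The plan is to deduce the FKG inequality for the field $g$ from the classical criterion for positively correlated Gaussian vectors (Pitt's theorem). We reduce it to a statement about the i.i.d.\ variables $W$, where Harris' inequality applies.

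First we reduce to finitely many variables. Since $A$ and $B$ depend on finitely many coordinates of $g$, say $(g_{x})_{x\in\Lambda}$ with $\Lambda$ finite, we truncate the sums~\eqref{eq:construction_gf}. For $N\in\N$ define $g^{N}_{x}=\sum_{|y|\le N}q(x-y)W_{y}$. Because $\beta>2$, the coefficients $q$ are square-summable, so $g^{N}_{x}\to g_{x}$ in $L^{2}$ and almost surely for each $x\in\Lambda$.

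Second, we prove the inequality for the truncated field. The key observation is that the coefficients $q(x-y)$ are all nonnegative. Hence each $g^{N}_{x}$ is a coordinatewise non-decreasing function of the finite vector $(W_{y})_{|y|\le N}$. Consequently, if $A$ is an increasing event in the variables $(g_{x})_{x\in\Lambda}$, then $\{g^{N}\in A\}$ is an increasing event in the independent variables $(W_{y})_{|y|\le N}$, and the same holds for $B$. Harris' inequality for product measures on $\R^{M}$ then gives
\begin{equation*}
\P\big(g^{N}\in A\cap B\big)\ge \P\big(g^{N}\in A\big)\,\P\big(g^{N}\in B\big).
\end{equation*}
This is the one-dimensional Chebyshev sum inequality iterated by induction over coordinates, and it applies to increasing Borel sets on $\R^{M}$. (Alternatively, one can note that $\Cov(g_{x},g_{z})=\sum_{y}q(x-y)q(z-y)\ge0$ and invoke Pitt directly.)

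Third, we pass to the limit $N\to\infty$, which I expect to be the main obstacle. Convergence in distribution of $(g^{N}_{x})_{x\in\Lambda}$ to $(g_{x})_{x\in\Lambda}$ only yields $\P(g^{N}\in A)\to\P(g\in A)$ when the boundary of $A$ has zero mass. To avoid this issue, I would first prove the equivalent functional form $\E[fh]\ge\E[f]\E[h]$ for bounded continuous increasing functions $f,h$ of $(g_{x})_{x\in\Lambda}$. For such functions the passage to the limit follows by dominated convergence using the almost sure convergence. The statement for general increasing events then follows by approximating indicators of increasing sets by bounded continuous increasing functions, which is possible because the law of $(g_{x})_{x\in\Lambda}$ is a nondegenerate or degenerate Gaussian measure. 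In the events relevant to us, namely $\{g_{x}\le\lambda\}$-type crossing events, which are increasing in $\omega_{\lambda}$ and hence monotone in $g$, the boundaries are null sets, since each $g_{x}$ has a continuous distribution because $q\not\equiv0$. Thus the limit can also be taken directly for those events.
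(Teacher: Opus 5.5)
Your proof is correct, but it takes a different route from the paper. The paper gives no argument of its own: it invokes Pitt's theorem, which says that a Gaussian vector with nonnegative covariances is associated, and here $\mathrm{Cov}(g_x,g_y)=q*q(x-y)\ge 0$. This is your parenthetical alternative. Your main argument instead uses the moving-average structure $g_x=\sum_y q(x-y)W_y$ with $q\ge 0$. Each truncation $g^N$ is a coordinatewise non-decreasing function of finitely many independent variables, so Harris' inequality applies, and a limiting argument transfers the inequality to $g$.

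Your route is more elementary. Only the independence of the $W_y$ and the square-summability of $q$ enter, and you avoid the analytic proof behind Pitt's theorem. It is less general, though. It needs the field to be a nonnegative linear image of independent variables. Pitt's theorem covers any Gaussian field with nonnegative covariances, including those whose covariance is not of the form $q*q$ with $q\ge 0$.

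One sentence needs repair. You justify approximating indicators of increasing sets by continuous increasing functions ``because the law of $(g_x)_{x\in\Lambda}$ is a nondegenerate or degenerate Gaussian measure''. That gives no reason, since every Gaussian law is one or the other. The claim itself is true for any Borel probability measure $\mu$ on $\R^{\Lambda}$:
\begin{itemize}
\item Given an increasing Borel set $U$ and $\varepsilon>0$, inner regularity gives a compact $K\subset U$ with $\mu(U\setminus K)<\varepsilon$.
\item Then $F=K+[0,\infty)^{\Lambda}\subset U$ is closed and increasing.
\item The functions $1-\min\{1,k\,\mathrm{dist}(\cdot,F)\}$ are continuous and increasing, and they decrease to $\textbf{1}_{F}$.
\item Hence $\mu(U\cap V)\ge(\mu(U)-\varepsilon)(\mu(V)-\varepsilon)$.
\end{itemize}
Alternatively, since $q\not\equiv 0$, the covariance matrix of $(g_x)_{x\in\Lambda}$ is positive definite. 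Boundaries of increasing sets are Lebesgue-null, because they meet each line in direction $(1,\dots,1)$ at most once. So your continuity-set argument works for all increasing events.

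Your fallback for the events actually used later, whose boundaries lie in $\bigcup_{x\in\Lambda}\{g_x=\lambda\}$, is correct as stated. Note only that these events are decreasing in $g$. This is harmless, because FKG for increasing events implies it for decreasing ones by taking complements.
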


For every $x \in \Z^{2}$, $g_{x}$ is a centered Gaussian random variable with variance
\begin{equation}
\label{eq:variance}
\sigma_{q}^2=\sum_{y \in \Z^{2}} q(y)^{2} < \infty,
\end{equation}
since $\beta >2$.
The covariance structure in this field is given by
\begin{equation}
\label{e:covariance_g}
\Cov\big(g_{x},g_{y}\big) = \sum_{z \in \Z^{2}}q(x-z)q(y-z) = \sum_{z \in \Z^{2}}q(x-y-z)q(z) = q*q(x-y).
\end{equation}

\nc{c:correlation_convolution}
Due to~\eqref{eq:correlation_q}, we have
\begin{equation}
\begin{split}
  q*q(x) & = \sum_{z: \, |z-x| \geq \frac{1}{2}|x|}q(x-z)q(z) + \sum_{z: \, |z-x| < \frac{1}{2}|x|}q(x-z)q(z) \\
  & \leq 2\sum_{y: \, |y| \geq \frac{1}{2}|x|}\uc{c:corr}^{2}|y|^{-\beta} \leq \uc{c:correlation_convolution}|x|^{-\beta+2},
\end{split}
\end{equation}
for some constant $\uc{c:correlation_convolution} > 0$.

In order to establish~\eqref{h3}, we first notice that invariance with respects to symmetries of $\Z^{2}$ follows directly from the definition~\eqref{eq:construction_gf}. Ergodicity follows from the following lemma.
\begin{lemma}
    If $\beta>2$, the Gaussian field $(g_{x})_{x \in \Z^{2}}$ is strong mixing, and thus, ergodic with respect to shifts.
\end{lemma}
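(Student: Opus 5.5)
The plan is to prove strong mixing directly from the moving-average structure~\eqref{eq:construction_gf}, by approximating cylinder events with events that depend on finitely many of the i.i.d.\ variables $W_{y}$. Ergodicity then follows from the standard fact that a mixing shift-invariant measure is ergodic. Indeed, if $E$ is shift invariant, then $\P(E)=\P(E\cap\tau_{x}E)\to\P(E)^{2}$, so $\P(E)\in\{0,1\}$.

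Concretely, I will show that for all cylinder events $A,B$ depending on finitely many coordinates of $g$,
\begin{equation*}
\P(A\cap\tau_{x}B)\to\P(A)\P(B) \quad \text{as } |x|\to\infty,
\end{equation*}
where $\tau_{x}$ denotes the shift. A routine approximation argument extends this to all measurable events, since cylinder events generate the $\sigma$-algebra.

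For the key step, fix a finite set $F\subset\Z^{2}$ supporting both $A$ and $B$, and a truncation radius $R$. Define
\begin{equation*}
g^{R}_{x}=\sum_{|y-x|\le R}q(x-y)W_{y}.
\end{equation*}
Since $\beta>2$, we have $\sum_{y}q(y)^{2}<\infty$, so $\E[(g_{x}-g^{R}_{x})^{2}]=\sum_{|z|>R}q(z)^{2}\to0$ uniformly in $x$. Hence $(g_{z})_{z\in F}$ and $(g^{R}_{z})_{z\in F}$ are close in $L^{2}$, uniformly under shifts.

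The one delicate point is the passage from $L^{2}$-closeness to closeness of event probabilities, because the events $A,B$ may have non-smooth boundaries. I will handle this in one of two ways. The first is to approximate $\textbf{1}_{A}$ and $\textbf{1}_{B}$ in $L^{1}$ of the law of $(g_{z})_{z\in F}$ by bounded Lipschitz functions $f,h$. The second, for the relevant level-set events, is to use the fact that $g_{z}$ has a non-degenerate Gaussian density with variance $\sigma_{q}^{2}>0$ (recall $q\not\equiv0$). In that case $\P(|g_{z}-\lambda|<\delta)$ is small, so that $\P\big(\mathbf 1_{\{g_z\le\lambda\}}\neq\mathbf 1_{\{g^R_z\le\lambda\}}\big)\le \P(|g_z-\lambda|<\delta)+\delta^{-2}\E[(g_z-g_z^R)^2]$ is small.

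Either way, for any $\varepsilon>0$ we can pick $R$ so that
\begin{equation*}
|\P(A\cap\tau_{x}B)-\P(A^{R}\cap\tau_{x}B^{R})|<\varepsilon
\quad\text{and}\quad
|\P(A)\P(B)-\P(A^{R})\P(B^{R})|<\varepsilon
\end{equation*}
uniformly in $x$, where $A^{R},B^{R}$ denote the same events evaluated on $g^{R}$.

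The events $A^{R}$ and $\tau_{x}B^{R}$ depend on $W$ restricted to the $R$-neighbourhoods of $F$ and of $F+x$. These neighbourhoods are disjoint once $|x|>2R+\operatorname{diam}F$. Independence of the $W$'s then gives $\P(A^{R}\cap\tau_{x}B^{R})=\P(A^{R})\P(B^{R})$, using shift invariance of the i.i.d.\ family. Combining this with the two approximations yields $\limsup_{|x|\to\infty}|\P(A\cap\tau_xB)-\P(A)\P(B)|\le2\varepsilon$, and letting $\varepsilon\to0$ proves mixing.

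The main obstacle is the approximation step just described, namely controlling indicator events under the truncation. It is overcome by the non-degeneracy of the marginals together with the summability $\sum q^{2}<\infty$.
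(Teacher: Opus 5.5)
Your argument is correct, but it takes a different route from the paper. The paper uses Gaussianity directly. From $\beta>2$ it gets $\mathrm{Cov}(g_x,g_y)=q*q(x-y)\le C|x-y|^{2-\beta}\to0$. The joint Gaussian vector of $g$ on $[-L,L]^2$ and on the far translate $-nz+[-L,L]^2$ therefore has fixed within-block covariances and vanishing cross-covariances, so it converges weakly to two independent copies. This gives mixing for cylinders, and standard measure theory extends it to all events.

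You instead use the moving-average structure. The truncated field $g^R$ is a finite-range factor of the i.i.d.\ noise, so its events on far-apart sets are exactly independent. The estimate $\E[(g_x-g^R_x)^2]=\sum_{|z|>R}q(z)^2\to0$, uniform in $x$, transfers this to $g$.

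The paper's route buys brevity. The price is reliance on Gaussian-specific facts, and it glosses over the fact that weak convergence only controls events whose boundary is null for the limit law. That is harmless here, since the limit is a non-degenerate Gaussian, but it is unsaid. Yours is the standard argument that factors of i.i.d.\ processes are mixing. On the Lipschitz route it uses neither Gaussianity of $W$ nor covariance decay, only $q\in\ell^2$, which is what is needed for $g$ to be defined at all. It handles the event-approximation issue explicitly, and it is the same finite-range approximation the paper later uses to verify \eqref{h4}.

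One small point of bookkeeping, writing $g_F=(g_z)_{z\in F}$. Your displayed ``either way'' estimate for the truncated events $A^R,B^R$ is not what the Lipschitz route yields. There $f$ approximates $\mathbf{1}_A$ in $L^1$ of the law of $g_F$, not of the law of $g^R_F$. On that route, run the independence step with $f(g^R_F)$ and $h(g^R_{F+x})$ in place of the truncated indicators; the conclusion is unchanged.
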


\begin{proof}
    Let $A$ and $B$ be finitely-supported events depending only on the entries of $g$ in $[-L,L]^{2}$, for some $L>0$. Denote by $\uptau_{z}: \R^{\Z^{2}} \to \R^{\Z^{2}}$ the shift by $z \in \Z^{2} \setminus \{0\}$. 

    Our goal is to prove that
    \begin{equation}
        \lim_{n} \P ( g \in A \cap \uptau^{-1}_{nz}(B) ) = 0.
    \end{equation}

    Notice that $\uptau^{-1}_{nz}(B)$ is an event that depends only on the variables $g_{y}$, for $y \in -nz+[-L, L]^{2}$.
    
    Since $\beta>2$, it follows
\begin{equation}
    \Cov\big(g_{x},g_{y}\big) \to 0, \text{ as } |x-y| \to +\infty.
\end{equation}

    Furthermore, the covariance matrix $\Gamma^{n}$ between the variables $(g_{x})_{x \in [-L,L]^{2}}$ and $(g_{y})_{y \in -nz+[-L, L]^{2}}$ satisfies
    \begin{equation}
        \Gamma^{n}(x_{1}, x_{2}) = \Cov(g_{x_{1}}, g_{x_{2}}), \text{ if } x_{1}, x_{2} \in [-L,L]^{2} \text{ or } x_{1}, x_{2} \in -nz+[-L, L]^{2}
    \end{equation}

    As the weak limit of Gaussian random variables is Gaussian with limit means and variances, it follows that the vector $\big( (g_{x})_{x \in [-L,L]^{2}}, (g_{y})_{y \in -nz+[-L, L]^{2}} \big)$ converges weakly to $\big( (g_{x})_{x \in [-L,L]^{2}}, (g'_{x})_{x \in [-L,L]^{2}} \big)$, where $(g'_{x})_{x \in [-L,L]^{2}}$ is an independent copy of $(g_{x})_{x \in [-L,L]^{2}}$.

    This implies the result if $A$ and $B$ are cylinders. The general case follows from standard measure-theoretical arguments.
\end{proof}

Hypothesis~\eqref{h4} is the one that demands the most work to verify. The proof we present here is based on~\cite[Section~6.1]{bhkt} and relies on constructing finite-range approximations for $g$. For each $r>0$, define the truncated filed $\big(g_{x}^{r}\big)_{x \in \Z^{2}}$ via
\begin{equation}\label{eq:finite_range_approximation}
g^{r}_{x} = \sum_{y: \, |x-y| \leq \frac{r}{2}}q(x-y)W_{y}.
\end{equation}
Notice that, if $|x-y|>r$, then $g_{x}^{r}$ and $g_{y}^{r}$ are independent.

\nc{c:approximation}
\begin{lemma}\label{lemma:finite_range_approximation}
There exists a positive constant $\uc{c:approximation}>0$ such that, for every $r \geq 2$, $x \in \Z^{2}$, and $t \geq 0$,
\begin{equation}
\P \big( |g_{x}-g_{x}^{r}| \geq t \big) \leq 2e^{-\uc{c:approximation}t^{2}r^{2\beta-2}}.
\end{equation}
\end{lemma}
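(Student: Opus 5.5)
The difference $g_{x}-g^{r}_{x}$ is itself an explicit linear combination of the independent Gaussians, so the plan is to compute its variance and use a Gaussian tail bound. By \eqref{eq:construction_gf} and \eqref{eq:finite_range_approximation},
\begin{equation}
    g_{x}-g_{x}^{r} = \sum_{y: \, |x-y| > \frac{r}{2}} q(x-y) W_{y}.
\end{equation}
Since the $W_{y}$ are i.i.d.\ standard normal, this is a centered Gaussian random variable. Its variance is
\begin{equation}
    s_{r}^{2} = \sum_{z: \, |z|>\frac{r}{2}} q(z)^{2},
\end{equation}
which is finite because $\beta>2$, and which does not depend on $x$ by translation invariance. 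The series converges in $L^{2}$ and almost surely, so the Gaussian character passes to the limit.

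Next I bound $s_{r}^{2}$ using \eqref{eq:correlation_q}. Compare the sum with an integral over the region $\{|z| > r/2\}$ in the plane, or sum over dyadic annuli $\{2^{j} r/2 < |z| \leq 2^{j+1} r/2\}$, each containing at most $C(2^{j}r)^{2}$ lattice points. This gives
\begin{equation}
    s_{r}^{2} \leq \uc{c:corr}^{2} \sum_{|z|>r/2} |z|^{-2\beta} \leq C \int_{r/2-1}^{\infty} \rho^{1-2\beta}\, d\rho \leq C' r^{2-2\beta},
\end{equation}
for $r \geq 2$, where the condition $r\geq 2$ handles the lattice-versus-integral comparison near the inner radius. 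The integral converges since $2\beta-1>1$.

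Finally, apply the standard Gaussian tail bound $\P(|N| \geq t) \leq 2e^{-t^{2}/(2s^{2})}$ for $N \sim \mathcal{N}(0,s^{2})$. This yields
\begin{equation}
    \P\big(|g_{x}-g^{r}_{x}| \geq t\big) \leq 2\exp\big(-t^{2} r^{2\beta-2}/(2C')\big),
\end{equation}
so the claim holds with $\uc{c:approximation} = 1/(2C')$. If $s_{r}=0$ the difference vanishes almost surely and the bound is trivial.

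The only mildly delicate step is the lattice sum estimate, specifically making the constant uniform in $r\geq 2$. This is routine once the lattice points are counted in annuli.
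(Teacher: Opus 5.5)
Your proof is correct and follows the paper's own proof: write $g_x-g_x^r$ as a centered Gaussian with variance $\sum_{|z|>r/2}q(z)^2$, bound that variance by $C r^{2-2\beta}$ using \eqref{eq:correlation_q}, and apply the Gaussian tail bound. One small correction: the displayed integral $\int_{r/2-1}^{\infty}\rho^{1-2\beta}\,d\rho$ diverges at $r=2$ (the paper's bound $(r/2-1)^{-2\beta+2}$ has the same flaw), so the constant uniform in $r\ge 2$ comes from your dyadic-annulus count, or from $s_r^2\le\sigma_q^2$ on a bounded range of $r$, not from that integral.
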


\begin{proof}
We consider only the case $x=0$, the others following from translation invariance. Observe that
\begin{equation}
g_{0}-g_{0}^{r} = \sum_{y: \, |y| > \frac{r}{2}}q(-y)W_{y}
\end{equation}
is a centered Gaussian random variable with variance $\sum_{y: \, |y| > \frac{r}{2}}q(y)^{2}$.
Standard tail estimates yield
\begin{equation}
\P \big( |g_{0}-g_{0}^{r}| \geq t \big) \leq 2e^{-\frac{t^{2}}{2 \sum_{y: \, |y| > \frac{r}{2}}q(y)^{2}}}.
\end{equation}
To conclude, we estimate
\begin{equation}
\sum_{y: \, |y| > \frac{r}{2}}q(y)^{2} \leq \uc{c:corr}^{2}\sum_{y: \, |y| > \frac{r}{2}}|y|^{-2\beta} \leq 4\uc{c:corr}^{2}\sum_{n>\frac{r}{2}}n^{-2\beta+1} \leq \frac{2\uc{c:corr}^{2}}{\beta-1} \Big( \frac{r}{2}-1 \Big)^{-2\beta+2},
\end{equation}
finishing the proof of the proposition.
\end{proof}

Analogously to~\eqref{eq:finite_range_approximation}, we introduce the finite-range approximations of the percolation model, for every $\lambda \in \R$ and $r \geq 1$,
\begin{equation}
\label{eq:field_approximation}
\omega_{\lambda}^{r}(x) = \textbf{1}_{\{g_{x}^{r} \leq \lambda\}}.
\end{equation}

\nc{c:approximation_environment}
We now prove that the finite-range approximations above are unlikely to differ from $\omega_{\lambda}$.
\begin{lemma}
\label{lemma:finite_range_approximation_2}
Given $\varepsilon>0$< there exists $\uc{c:approximation_environment}>0$ such that, for every $\lambda \in \R$, $r \geq 2$, and $x \in \Z^{2}$,
\begin{equation}
\P \big( \omega_{\lambda}(x) \neq \omega_{\lambda}^{r}(x) \big) \leq \uc{c:approximation_environment}r^{-\beta+1+\varepsilon}.
\end{equation}
\end{lemma}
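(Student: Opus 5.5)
The plan is to split according to whether the two fields are far apart or close to the threshold $\lambda$. The configurations differ at $x$ only if $g_x$ and $g_x^r$ lie on opposite sides of $\lambda$. For a small parameter $t>0$, to be chosen as a function of $r$, this gives the inclusion
\begin{equation*}
\{\omega_\lambda(x)\neq\omega^r_\lambda(x)\}\subset\{|g_x-g^r_x|\geq t\}\cup\{|g^r_x-\lambda|< t\}.
\end{equation*}
The first event is controlled by Lemma~\ref{lemma:finite_range_approximation}, which bounds its probability by $2e^{-\uc{c:approximation}t^2r^{2\beta-2}}$.

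For the second event, note that $g^r_x$ is a centered Gaussian with variance $\sigma_r^2=\sum_{|y|\le r/2}q(y)^2$. Since $q$ is not identically zero and takes nonnegative values, $\sigma_r^2\geq\sigma_{2}^2>0$ for all $r\ge 2$, provided $q$ has some nonzero entry within radius $1$. If not, I take $r$ larger than a fixed $r_0$ and absorb the finitely many small $r$ into the constant, using the trivial bound $1$. The density of $g^r_x$ is therefore bounded by $(2\pi\sigma_{r_0}^2)^{-1/2}$, uniformly in $r\ge r_0$ and in $\lambda$. Hence $\P(|g^r_x-\lambda|<t)\leq Ct$ for a constant $C$ independent of $\lambda$ and $r$, which gives the uniformity in $\lambda$ required by the statement.

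Next I choose $t=r^{-\beta+1+\varepsilon}$. Then $t^2r^{2\beta-2}=r^{2\varepsilon}$, and the first term becomes $2e^{-\uc{c:approximation}r^{2\varepsilon}}$. This is stretched-exponentially small, so it is bounded by $C' r^{-\beta+1+\varepsilon}$ for some $C'$ depending on $\varepsilon$ and $\beta$. The second term is at most $C r^{-\beta+1+\varepsilon}$. Summing the two terms gives the claim with $\uc{c:approximation_environment}=C+C'$.

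The only delicate point is the uniform lower bound on the variance of the truncated field, which is what allows the anti-concentration bound to hold uniformly in $r$ and $\lambda$. It follows from the monotonicity of $\sigma_r^2$ in $r$, with the small values of $r$ handled by enlarging the constant.
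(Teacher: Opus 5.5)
Your proof is correct and follows the paper's argument: the same two-event inclusion, Lemma~\ref{lemma:finite_range_approximation} for the first event, a Gaussian density bound for the second, and the choice $t=r^{-\beta+1+\varepsilon}$. The only difference is that the paper applies the anti-concentration bound to $g_x$ rather than to $g^r_x$ (the inclusion holds with either). Because the variance of $g_x$ is the fixed positive number $\sigma_q^2$, independent of $r$, this avoids your uniform lower bound on the truncated variance and the separate treatment of small $r$.
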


\begin{proof}
Once again, it suffices to consider the case $x=0$. If $\omega_{\lambda}(0) \neq \omega_{\lambda}^{r}(0)$, then for every fixed $t \geq 0$, either $|g_{0}-g_{0}^{r}| \geq t$ or $g_{0} \in (\lambda - t, \lambda + t)$
This yields
\begin{equation}
\P \big( \omega_{\lambda}(0) \neq \omega_{\lambda}^{r}(0) \big) \leq \P \big( |g_{0}-g_{0}^{r}| \geq t \big) + \P\big( g_{0} \in (\lambda-t, \lambda+t) \big).
\end{equation}
Lemma~\ref{lemma:finite_range_approximation} bounds the first term in the right-hand side above. For the second term, notice that $g_{0} \sim \mathcal{N}(0, \sigma_{q})$, with $\sigma_{q}$ as in~\eqref{eq:variance}, and then bound the density of $g_{0}$ by $(2\pi\sigma_{q})^{-\frac{1}{2}}$ to obtain
\begin{equation}
\P\big( g_{0} \in (\lambda-t, \lambda+t) \big) \leq t\sigma_{q}^{-\frac{1}{2}},
\end{equation}
uniformly over $\lambda \in \R$. To conclude the proof, simply choose $t=r^{-\beta+1+\varepsilon}$.
\end{proof}

We are now in position to conclude the verification of~\eqref{h4}.
\nc{c:correlation_decay_gf}
\begin{proposition}\label{prop:verify_h4_gf}
    Assume $\beta>3$. Given $\varepsilon \in (0, \beta-3)$, there exists $\uc{c:correlation_decay_gf}>0$ such that, for all $n \in \N$, if $A$ and $B$ are events supported in $B(0, 3n)$ and $B(0, 4n)^{c}$, respectively, then
    \begin{equation}\label{eq:correlation_decay_gf}
        \Cov_{\lambda} \big( \textnormal{\textbf{1}}_{A}, \textnormal{\textbf{1}}_{B}) \leq \uc{c:correlation_decay_gf} n^{-\beta+3+\varepsilon}.
    \end{equation}
    In particular,~\eqref{h4} is satisfied if $\beta> 3$.
\end{proposition}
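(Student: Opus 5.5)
The strategy is a coupling with the finite-range approximation $\omega^{r}_{\lambda}$ from~\eqref{eq:field_approximation}, taking the truncation radius $r=n$. Since $g^{n}_{x}$ only uses the variables $W_{y}$ with $|x-y|\le n/2$, the configuration $(\omega^{n}_{\lambda}(x))_{x\in B(0,3n)}$ depends only on $(W_{y})_{y\in B(0,3n+n/2)}$. Likewise, $(\omega^{n}_{\lambda}(x))_{x\notin B(0,4n)}$ depends only on $(W_{y})_{y\notin B(0,4n-n/2)}$. These index sets are disjoint, so the restrictions of $\omega^{n}_{\lambda}$ to the two supports are independent.

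Write $A^{n}$ and $B^{n}$ for the events $A$ and $B$ evaluated on $\omega^{n}_{\lambda}$ instead of $\omega_{\lambda}$. Then $\P(A^{n}\cap B^{n})=\P(A^{n})\P(B^{n})$.

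Next, control the discrepancy. Let $D_{A}$ be the event that $\omega_{\lambda}$ and $\omega^{n}_{\lambda}$ differ somewhere in $B(0,3n)$. Then $|\textbf{1}_{A}-\textbf{1}_{A^{n}}|\le \textbf{1}_{D_{A}}$. By a union bound and Lemma~\ref{lemma:finite_range_approximation_2},
\begin{equation*}
\P(D_{A}) \le |B(0,3n)|\,\uc{c:approximation_environment} n^{-\beta+1+\varepsilon} \le C n^{-\beta+3+\varepsilon}.
\end{equation*}
This bound is uniform in $\lambda$, because the lemma is.

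Writing $\Cov(\textbf{1}_{A},\textbf{1}_{B})=\Cov(\textbf{1}_{A},\textbf{1}_{B})-\Cov(\textbf{1}_{A^{n}},\textbf{1}_{B^{n}})$ and splitting the difference term by term, the error is at most a constant times $\P(D_{A})+\P(D_{B})$. The $A$-side is already bounded. The real issue is $B$, whose support $B(0,4n)^{c}$ is infinite, so a naive union bound over $x\notin B(0,4n)$ diverges.

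This is the main obstacle, and I would handle it first by reduction to finitely supported $B$. Assume $B$ depends on the configuration in the annulus $B(0,N)\setminus B(0,4n)$ for some $N$. The general case follows by approximating $\textbf{1}_{B}$ in $L^{1}$ by such events (martingale convergence), since the covariance is continuous in $L^{1}$ and the bound must not depend on $N$.

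For $N$-local $B$, I would use the truncation non-uniformly in space. For $x$ at distance $|x|$, truncate $g_{x}$ at radius $r_{x}=\min(n,|x|-3.5n)$. This is of order $n$ near the box and never larger than needed for independence, so the independence of the two sides is preserved. The remaining problem is that summing $|x|^{-\beta+1+\varepsilon}$ over $|x|\ge 4n$ must converge to $O(n^{-\beta+3+\varepsilon})$, which holds precisely because $\beta-1-\varepsilon>2$. Points at distance comparable to $n$ carry truncation radius of order $n$. For far points, I would instead let the truncation radius grow like $|x|/2$: all $W_{y}$ used lie outside $B(0,|x|/2)\supset$ the neighborhood of the $A$-side, so independence still holds, and each site's error is $O(|x|^{-\beta+1+\varepsilon})$. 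Summing over $|x|\ge 4n$ gives $C\sum_{k\ge 4n} k\cdot k^{-\beta+1+\varepsilon}\le C n^{-\beta+3+\varepsilon}$.

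Combining the independent approximants with the two discrepancy bounds yields~\eqref{eq:correlation_decay_gf}. Finally, $r_{n}=C n^{-\beta+3+\varepsilon}$ satisfies $\sum_{n} r_{n}/n<\infty$ whenever $\beta-3-\varepsilon>0$, which gives~\eqref{h4}.
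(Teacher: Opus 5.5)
Your proposal is correct and is essentially the paper's argument: the paper couples $\omega_\lambda$ with a finite-range approximation whose truncation radius is $n$ inside $B(0,4n)$ and grows linearly outside (it takes $r=|x|-3n-1$ for $|x|\ge 4n$), which makes the two sides independent, and then bounds the discrepancy by a union bound with Lemma~\ref{lemma:finite_range_approximation_2}, summing over shells using $\beta-1-\varepsilon>2$. Your preliminary reduction to finitely supported $B$ is harmless but unnecessary: once the radius grows linearly in $|x|$, the union bound over the infinite support already converges, with a bound independent of any cutoff $N$.
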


\begin{proof}

Define the intermediate field
\begin{equation}
\omega_{\lambda, n}(x) =
  \begin{cases}
    \omega_{\lambda}^{n}(x), & \text{ if } |x| < 4n; \\
    \omega_{\lambda}^{|x|-3n-1}(x), & \text{ if } |x| \geq 4n.
  \end{cases}
\end{equation}

It follows from the definition of the approximated field $g^{r}$ in~\eqref{eq:field_approximation} that the event $\{\omega_{\lambda, n} \in A\}$ depends only on $\{ W_{y}, |y| \leq \frac{3}{2}n\}$ and $\{\omega_{\lambda,n} \in B\}$ is determined by $\{W_{y}: |y| > 3/2n\}$, implying that these two events are independent. We now estimate
\begin{equation}\label{eq:corr_1}
  \begin{split}
      \P_{\lambda} (\omega_{\lambda} \in A \cap B) +  & \leq \P_{\lambda} (\omega_{\lambda,n} \in A \cap B) + \P_{\lambda} ( \omega_{\lambda} \neq \omega_{\lambda,n}) \\
      & = \P_{\lambda} (\omega_{\lambda,n} \in A) \P_{\lambda} (\omega_{\lambda,n} \in B) + \P_{\lambda} ( \omega_{\lambda} \neq \omega_{\lambda,n}) \\
      & \leq \P_{\lambda} (\omega_{\lambda} \in A) \P_{\lambda} (\omega_{\lambda} \in B) + 3\P_{\lambda} ( \omega_{\lambda} \neq \omega_{\lambda,n})
  \end{split}
\end{equation}

In an analogous fashion,
\begin{equation}\label{eq:corr_2}
    \P_{\lambda} (\omega_{\lambda} \in A) \P_{\lambda} (\omega_{\lambda} \in B) \leq \P_{\lambda} (\omega_{\lambda} \in A \cap B) + 3\P_{\lambda} ( \omega_{\lambda} \neq \omega_{\lambda,n}).
\end{equation}
In particular,
\begin{equation}
     \Cov_{\lambda} \big( \textbf{1}_{A}, \textbf{1}_{B}) \leq 3\P_{\lambda} ( \omega_{\lambda} \neq \omega_{\lambda,n}).
\end{equation}

To conclude, we use Lemma~\ref{lemma:finite_range_approximation_2} to bound the probability above:
\begin{equation}
  \begin{split}
    \P_{\lambda} ( \omega_{\lambda} \neq \omega_{\lambda,n}) & \leq \sum_{x \in \Z^{2}} \P_{\lambda} ( \omega_{\lambda} (x) \neq \omega_{\lambda,n}(x)) \\
    & \leq 16 \uc{c:approximation_environment} n^{2} n^{-\beta+1+\varepsilon} + 8 \uc{c:approximation_environment} \sum_{k=4n}^{+\infty} k (k-3n-1)^{-\beta+1+\varepsilon} \\
    & \leq 16 \uc{c:approximation_environment} n^{-\beta+3+\varepsilon} + 8 \uc{c:approximation_environment} \sum_{k=n-1}^{\infty} (k+3n+1) k^{-\beta+1+\varepsilon} \\
    & \leq 16 \uc{c:approximation_environment} n^{-\beta+3+\varepsilon} + 8 \uc{c:approximation_environment} \sum_{k=n-1}^{\infty} 5k \times k^{-\beta+1+\varepsilon} \\
    & \leq 16 \uc{c:approximation_environment} n^{-\beta+3+\varepsilon} + 40 \uc{c:approximation_environment} \frac{1}{\beta-3-\varepsilon}(n-1)^{-\beta+3+\varepsilon}
  \end{split}
\end{equation}
where in the third inequality we choose $n \geq 5$. The proof of~\eqref{eq:correlation_decay_gf} is complete by adjusting the constants.

To verify~\eqref{h4}, notice that $-\beta+2+\varepsilon<-1$ and thus
\begin{equation}
    \sum_{n=1}^{\infty} \frac{r_{n}}{n} \leq \uc{c:correlation_decay_gf} \sum_{n=1}^{\infty} n^{-\beta+2-\varepsilon} < \infty.
\end{equation}
This concludes the proof.
\end{proof}

\subsection{Boolean percolation}

Consider a Poisson point process of points in $\R^{2}$ with intensity $\lambda>0$. Given a realization $\omega$, independently place a circle of random radius $R$ around each point of $\omega$ in an independent fashion.

Let $\mathcal{O}$ denote the region covered by the circles and let $\theta(\lambda)$ be the probability that the origin is covered by a connected component of $\mathcal{O}$ composed of infinitely many circles.

The model satisfies~\eqref{h1},~\eqref{h2},~\eqref{h3},~\eqref{h5}, and~\eqref{h6} (see for example~\cite{mr}). The proposition below verifies~\eqref{h4}.
\begin{proposition}\label{prop:verify_h4}
    If the radius distribution satisfies $\E[ R^{2} \log R]< \infty$ then~\eqref{h4} is satisfied for compact intervals of parameters (see Remark~\ref{rmk:t1}).
\end{proposition}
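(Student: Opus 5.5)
The plan is to bound the covariance of events $A$ supported in $[-3n,3n]^{2}$ and $B$ supported outside $[-4n,4n]^{2}$ by the probability that some disc connects the two regions, i.e.\ that a single ball intersects both $[-3n,3n]^{2}$ and the complement of $[-4n,4n]^{2}$, or more simply covers a ring of width $n$. Write the Poisson process of marked points $(x,R_x)$ as the union of two independent Poisson processes: $\eta_{1}$, the points whose disc $B(x,R_{x})$ meets $[-3.5n,3.5n]^{2}$, and $\eta_{2}$, the remaining points. Let $\eta_{1}'$ be an independent copy of $\eta_{1}$. Define $\tilde\eta$ by using only the discs of $\eta_{1}$ lying entirely inside $[-4n,4n]^{2}$, and $\tilde\eta'$ analogously for $B$, so that the configuration restricted to $[-3n,3n]^{2}$ and to $([-4n,4n]^{2})^{c}$ can be coupled with two independent configurations. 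Concretely, the indicator of $A$ is determined by discs meeting $[-3n,3n]^{2}$, and that of $B$ by discs meeting $([-4n,4n]^{2})^{c}$; these two families are built from disjoint regions of the marked Poisson process, except for the ``bad'' discs meeting both, i.e.\ discs of radius at least $n/2$ roughly whose center lies near either box. Dropping the bad discs produces independent events $\tilde A,\tilde B$, and then
\[
\Cov_\lambda(\mathbf 1_A,\mathbf 1_B)\le 3\,\P_\lambda(\text{some disc meets both } [-3n,3n]^{2} \text{ and } ([-4n,4n]^{2})^{c}),
\]
exactly as in the argument leading to \eqref{eq:corr_1} and \eqref{eq:corr_2}.

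Next I estimate the bad probability by the expected number of bad discs. A disc centered at $x$ with radius $\rho$ is bad only if $\rho\ge n$ and $|x|\le 4n+\rho$, up to constants. Hence, for $\lambda$ in a compact interval $[0,\Lambda]$,
\[
r_{n} := \P(\text{bad}) \le \Lambda \int \P(R\ge \rho)\,\mathbf 1_{\{|x|\le C(n+\rho)\}}\,dx \, d\mu \le C\Lambda\, \E\big[R^{2}\mathbf 1_{\{R\ge n\}}\big],
\]
using that $(n+R)^{2}\le 4R^{2}$ on $\{R\ge n\}$. This sequence is non-increasing in $n$, so it is a legitimate choice in \eqref{eq:covariance}.

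Finally I check \eqref{h4}:
\[
\sum_{n\ge1}\frac{r_{n}}{n}\le C\,\E\Big[R^{2}\sum_{1\le n\le R}\frac1n\Big]\le C\,\E[R^{2}(1+\log R)],
\]
which is finite under $\E[R^{2}\log R]<\infty$ (this also forces $\E R^{2}<\infty$). The interchange of sum and expectation is justified by Tonelli.

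The main obstacle is making the decoupling rigorous: the events are not literally supported on lattice sets but on covered regions, and the events $A,B$ are defined through connectivity of $\mathcal O$ within the boxes. I would first treat $A,B$ as events measurable with respect to $\mathcal O\cap[-3n,3n]^{2}$ and $\mathcal O\setminus[-4n,4n]^{2}$, respectively. On the complement of the bad event, $\mathcal O\cap[-3n,3n]^{2}$ is a function of the discs with center in $[-4n,4n]^{2}$ and radius less than $n$, and $\mathcal O\setminus[-4n,4n]^{2}$ is a function of the discs with center outside $[-3n,3n]^{2}$ and radius less than $n$. These two families are still not disjoint, so I would split the marked process by whether the center lies inside $[-3.5n,3.5n]^{2}$: for discs of radius below $n/2$, the inner ones never reach $([-4n,4n]^{2})^{c}$ and the outer ones never reach $[-3n,3n]^{2}$. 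I would therefore redefine bad discs with the threshold $n/2$, which changes only the constants.
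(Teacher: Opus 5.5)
Your argument is essentially the paper's and is correct in its main line. You bound the covariance by $3\P_\lambda(\text{some disc meets both } [-3n,3n]^2 \text{ and } \R^2\setminus[-4n,4n]^2)$ as in \eqref{eq:corr_1}--\eqref{eq:corr_2}. You then bound this by the expected number of such discs, at most $C\lambda\,\E[R^2\mathbf{1}_{\{R\ge n/2\}}]$, and sum over $n$ using Tonelli. Taking $r_n$ to be this upper bound, with the supremum over the compact parameter interval, gives an automatically non-increasing sequence; the paper's choice $r_n=3\P_\lambda(\mathcal A_n^c)$ does not obviously have this property.

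The decoupling in your first paragraph is the right one. The marked points whose disc meets only $[-3n,3n]^2$, only $\R^2\setminus[-4n,4n]^2$, or both, form three independent Poisson processes. Evaluating $A$ on the first and $B$ on the second gives independent events $\tilde A,\tilde B$ that agree with $A,B$ off the bad event.

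Your last paragraph, however, should be discarded rather than used to make this rigorous. Off the bad event, $\mathcal O\setminus[-4n,4n]^2$ is \emph{not} a function of discs of radius less than $n$. For example, a disc of radius $100n$ centred at $(1000n,0)$ contributes to it without meeting $[-3n,3n]^2$. Likewise, a disc of radius $3.9n$ centred at the origin contributes to $\mathcal O\cap[-3n,3n]^2$ without being bad.

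You might try to repair this by declaring every disc of radius at least $n/2$ that meets either region to be bad. But then the bad event has probability one as soon as $\P(R\ge n/2)>0$, since infinitely many Poisson points outside the box carry such radii. The three-way split above needs no split by centre location or radius. The threshold $n/2$ (not $n$, as you first wrote) enters only in the first-moment bound, because a disc meeting both regions must have diameter at least $n$.
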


\begin{proof}
    Fix $n$ and consider the event
  \begin{equation}
    \mathcal{A}_{n} = \big\{ \text{no disk touches } [-3n, 3n]^{2} \text{ and } \R^{2} \setminus [-4n,4n]^{2} \big\},
  \end{equation}

  An analogous computation as in~\eqref{eq:corr_1} and~\eqref{eq:corr_2} implies that, if $A$ and $B$ are events depending only on $[-3n,3n]^{2}$ and $\R^{2} \setminus [-4n,4n]^{2}$, respectively, then
  \begin{equation}
      \Cov_{\lambda}(\textbf{1}_{A}, \textbf{1}_{B}) \leq 3 \P_{\lambda}\big( \mathcal{A}_{n}^{c} \big).
  \end{equation}

  Let us now compute the probability above. Notice that, integrating over the Poisson point process we have
  \begin{equation}
    \begin{split}
      \P_{\lambda}\big( \mathcal{A}_{n}^{c} \big) & \leq \lambda(8n)^{2} \P \Big(R \geq \frac{n}{2} \Big) + \lambda \int_{4n}^{+\infty} 8y \P \big( R> y-3n) \d y \\
      & = 64\lambda n^{2} \P \Big(R \geq \frac{n}{2} \Big) + 8\lambda \int_{n}^{+\infty} (y+3n) \P \big( R > y) \d y \\
      & \leq 64\lambda n^{2} \P \Big(R \geq \frac{n}{2} \Big) + 8\lambda \E \big[ (R^{2}+3nR) \textbf{1}_{R \geq n} \big] \\
    & \leq 64\lambda n^{2} \P \Big(R \geq \frac{n}{2} \Big) + 32\lambda \E \big[R^{2} \textbf{1}_{R \geq n} \big].
    \end{split}
  \end{equation}

  In order to check~\eqref{h4}, we now bound
  \begin{equation}
    \begin{split}
       \sum_{n=1}^{\infty} \frac{r_{n}}{n} & = \sum_{n=1}^{\infty} \frac{3\P_{\lambda}\big( \mathcal{A}_{n}^{c} \big)}{n} \\
       & \leq 192\lambda \sum_{n=1}^{\infty} n \P \Big(R \geq \frac{n}{2} \Big) + 96\lambda \E \Big[ R^{2} \sum_{n=1}^{+\infty} \frac{1}{n} \textbf{1}_{R \geq n} \Big] \\
       & \leq 192\lambda \E \big[ 4R^{2} \big] + 96\lambda \E \Big[ R^{2} (\ln \lceil R \rceil +1) \Big] < \infty,
    \end{split}
  \end{equation}
  by the moment hypothesis on the radius distribution. This concludes the proof.
\end{proof}

\begin{remark}
    While Theorem~\ref{t:no_percolation} implies no percolation at criticality for Boolean percolation when $\E[R^{2} \log R]< \infty$, recovering the result from ~\cite{att}, it also provides an example that~\eqref{h4} is not optimal in all contexts, as~\cite{att2} proves a sharp result: no percolation at criticality actually holds whenever $\E[R^{2}] < \infty$.
\end{remark}

\subsection{Ellipses percolation}

Ellipses percolation is a model introduced by Teixeira and Ungaretti in~\cite{tu}. This model shares some similarities with Booelan percolation: instead of selecting centers for disks via a Poisson point process, here in each point an ellipse with minor axis equal to one and random major axis is placed. 

In order to define the model properly, fix the major axis distribution $\rho$ supported in $[1, +\infty)$ satisfying
\begin{equation}
   cr^{-\alpha} \leq \rho[r, +\infty) \leq \tilde{c}r^{-\alpha},
\end{equation}
for some constants $c,\tilde{c}>0$ and some $\alpha >0$.

Consider a Poisson point process with intensity $\lambda$. Independently place at each point of the process an ellipse with an uniformly chosen direction, minor axis one, and major axis sampled according to $\rho$.

In~\cite{tu}, the authors prove that this model satisfies~\eqref{h1}, \eqref{h2},~\eqref{h3}, and~\eqref{h5}. Furthermore, the same paper proves that the phase transition is non-trivial whenever $\alpha>2$. As for~\eqref{h4}, the proof of Proposition~\ref{prop:verify_h4} still applies in this case. In particular, if $\alpha>2$,
\begin{equation}
  \begin{split}
    \E[R^{2} \log R] & = \int_{1}^{\infty} (2r \log r + r) \rho[r, +\infty) \d r \leq \tilde{c} \int_{1}^{\infty} (2r \log r + r) r^{-\alpha} \d r < \infty.
  \end{split}
\end{equation}

We conclude that, as long as $\alpha>2$, ellipses percolation does not percolate at criticality.

\bibliographystyle{plain}
\bibliography{all}

\end{document}